\documentclass[a4paper,12pt,twoside]{amsart}

\usepackage{amsmath, amsthm, amsfonts, amssymb}
\usepackage{enumerate}
\usepackage{tikz}
\usepackage{tikz-cd}   
\usepackage{color}
\usepackage{geometry}
\usepackage{float}

\title[Equivariant Benjamini-Schramm convergence and $\ell^2$-multiplicities]
 {Equivariant Benjamini-Schramm convergence of simplicial complexes and $\ell^2$-multiplicities} 

\author[S. Kionke]{Steffen Kionke}
\author[M. Schr\"odl-Baumann]{Michael Schr\"odl-Baumann}

\address{Karlsruher Institut f\"ur Technologie \\ Fakult\"at f\"ur Mathematik \\
  Institut f\"ur Algebra und Geometrie \\ Englerstr.2 \\
  76131 Karlsruhe \\ Germany.}

\email{steffen.kionke@kit.edu}
\email{michael.schroedl@kit.edu}

\thanks{The research was funded by the Deutsche Forschungsgemeinschaft
  (DFG, German Research Foundation) -  338540207.}

\date{\today}
\subjclass[2010]{Primary 57M10 ; Secondary 55N35 }
\keywords{Benjamini-Schramm convergence, $\ell^2$-Invariants}

\theoremstyle{plain}
\newtheorem{theorem}{Theorem}
\newtheorem*{theorem*}{Theorem}
\newtheorem*{proposition*}{Proposition}
\newtheorem*{lemma*}{Lemma}
\newtheorem{lemma}[theorem]{Lemma}

\newtheorem*{corollary*}{Corollary}
\newtheorem{proposition}[theorem]{Proposition}

\theoremstyle{definition}
\newtheorem{definition}[theorem]{Definition}

\newtheorem{remark}[theorem]{Remark}
\newtheorem*{remark*}{Remark}
\newtheorem{example}[theorem]{Example}
\newtheorem*{example*}{Example}

\newtheorem*{question*}{Question}

\numberwithin{equation}{section}
\numberwithin{theorem}{section}

\DeclareMathOperator{\id}{Id}

\DeclareMathOperator{\ind}{ind}
\DeclareMathOperator{\Irr}{Irr}

\DeclareMathOperator{\im}{im}
\DeclareMathOperator{\tr}{tr}
\DeclareMathOperator{\pr}{pr}
\DeclareMathOperator{\Orb}{Orb}
\DeclareMathOperator{\spec}{spec}
\DeclareMathOperator{\St}{St} 

\newcommand{\normal}{\trianglelefteq}
\newcommand{\SC}[1]{\mathcal{SC}_{*}^D(#1)}
\newcommand{\SCo}[2]{\mathcal{SC}_{*}^{D}(#1,#2)}
\newcommand{\SCC}[1]{\mathcal{SC}_{**}^D(#1)}
\newcommand{\gcong}{\stackrel{G}{\cong}}
\newcommand{\CC}{C^{(2)}}
\newcommand{\HH}{H^{(2)}}
\providecommand{\bbN}{\mathbb{N}}
\providecommand{\bbR}{\mathbb{R}}

\providecommand{\bbQ}{\mathbb{Q}}
\providecommand{\bbZ}{\mathbb{Z}}

\providecommand{\bbC}{\mathbb{C}}

\renewcommand{\epsilon}{\varepsilon}
\renewcommand{\phi}{\varphi}


\begin{document}

\begin{abstract}
  We define a variant of Benjamini-Schramm convergence for finite simplicial complexes with the
  action of a fixed finite group $G$ which leads to the notion of random rooted
  simplicial $G$-complexes.  For every random rooted simplicial $G$-complex we define a
  corresponding $\ell^2$-homology and the $\ell^2$-multiplicity of an irreducible
  representation of $G$ in the homology.  The $\ell^2$-multiplicities generalize the
  $\ell^2$-Betti numbers and we show that they are continuous on the space of sofic random rooted
  simplicial $G$-complexes.
  In addition, we study induction of random rooted complexes and discuss the effect on $\ell^2$-multiplicities.
\end{abstract}

\maketitle

\section{Introduction}
Benjamini and Schramm \cite{Benjamini-Schramm} introduced the concept of random rooted
graphs as probability measures on the space of connected rooted graphs.
This allowed them to define convergence -- today known as \emph{Benjamini-Schramm convergence} -- of sequences
of finite graphs and to study the corresponding limit random rooted graphs.
It was realized by Aldous and Lyons \cite{Aldous-Lyons2007} that Benjamini-Schramm limits of finite graphs share a useful mass-transport property, called \emph{unimodularity}, which, when added to the definition of random rooted graphs, allows to extend several results from quasi-transitive graphs to this setting. These two insights provide the basis of a large number subsequent developments.

The basic idea of Benjamini-Schramm convergence is not specific to graphs and was, in particular,
generalized to simplicial complexes in the work of Elek \cite{Elek2010}, Bowen \cite{Bowen2015} and one of the authors \cite{schroedl}. The central result of \cite{schroedl} (extending similar results in \cite{Elek2010,Bowen2015}) is
that suitably defined $\ell^2$-Betti numbers of random rooted simplicial complexes are Benjamini-Schramm continuous (on the space of \emph{sofic} random rooted complexes).
For graphs a stronger result was proven in \cite{AbertThomVirag}.
Here we introduce an \emph{equivariant} Benjamini-Schramm convergence for simplicial complexes with the action of a fixed finite group; to our knowledge this is a new concept even for graphs. Then we define refinements of $\ell^2$-Betti numbers, namely \emph{$\ell^2$-multiplicities}, and prove a corresponding continuity result.

\subsection*{Statement of main results} We fix a finite group $G$ and consider the space $\SC{G}$ of isomorphism classes of rooted
simplicial $G$-complexes of vertex degree at most $D$. Here a rooted simplicial $G$-complex
$(K,o)$ consists of a simplical complex with an action of $G$ and a distinguished $G$-orbit $o$ of
vertices in $K$ which touches every connected component of $G$. A \emph{random rooted simplicial $G$-complex} $\mu$ is a unimodular probability measure on $\SC{G}$; for details we refer to Section~\ref{sec:random-rooted-complexes}.
For instance, picking a root uniformly at random in a finite simplicial $G$-complex defines a random rooted $G$-complex. Limits of laws of finite simplicial $G$-complexes are called \emph{sofic}; see Definition~\ref{def:sofic}.

 Let $K$ be a finite simplical
$G$-complex. The group $G$ acts on $K$ and thus acts on the homology groups $H_n(K,\bbC)$,
i.e., we have a finite dimensional representation of $G$ on $H_n(K,\bbC)$. This representation
decomposes as a direct sum of irreducible representations of $G$ and every irreducible
representation $\rho \in \Irr(G)$ occurs a finite number, say $m(\rho, H_n(K,\bbC))$, of times
in this decomposition. The number $m(\rho,H_n(K,\bbC))$ is called the \emph{multiplicity} of
$\rho$ in $H_n(K,\bbC)$.  Drawing from this, we define $\ell^2$-multiplicities in Section
\ref{sec:homology-and-mult}; these are nonnegative real numbers $m^{(2)}_n(\rho,\mu)$ for
every random rooted simplicial $G$-complex $\mu$ and every irreducible representation
$\rho \in \Irr(G)$. In fact, if $\mu_K$ is the law of a finite $G$-complex, then
\[ m_n^{(2)}(\rho,\mu_K) = \frac{m(\rho,H_n(K,\bbC))}{|K^{(0)}|};\]
see Example~\ref{ex:finite-complex-multiplicities}.
Our main result is the following approximation theorem.
\begin{theorem}\label{thm:approximation}
  Let $(\mu_k)_{k\in\bbN}$ be a sequence of random rooted simplicial $G$-complexes. If every $\mu_k$ is sofic and the sequence
  converges weakly to $\mu_\infty$, then 
  \[ \lim_{k\to \infty} m_n^{(2)}(\rho,\mu_k) = m_n^{(2)}(\rho, \mu_\infty) \]
  for every $n \in \bbN_0$ and every irreducible representation $\rho$ of $G$.
\end{theorem}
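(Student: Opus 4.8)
The plan is to realise $m_n^{(2)}(\rho,\mu)$ as the mass at $0$ of a spectral measure and to transfer the determinant bound from the theory of $\ell^2$-Betti numbers to the $\rho$-weighted setting. Let $\Delta_n$ be the combinatorial Laplacian on the $\ell^2$-chains of a random rooted complex and $\tau_\mu$ the trace attached to $\mu$. Since $G$ acts simplicially it commutes with $\partial$ and hence with $\Delta_n$, so the central projection $p_\rho=\frac{\dim\rho}{|G|}\sum_{g\in G}\overline{\chi_\rho(g)}\,g\in\bbC[G]$ commutes with $\Delta_n$; because $G$ permutes the oriented simplices it acts unitarily, so $p_\rho$ is a self-adjoint idempotent with $0\le p_\rho\le\id$. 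I would therefore introduce the positive measure $\nu_{\rho,n,\mu}$ on $[0,R]$ defined by
\[ \int f\,d\nu_{\rho,n,\mu}=\frac{1}{\dim\rho}\,\tau_\mu\!\bigl(p_\rho\, f(\Delta_n)\bigr), \]
where $\|\Delta_n\|\le R$ with $R=R(D,n)$ depending only on the degree bound $D$ and on $n$. Unwinding the definition in Section~\ref{sec:homology-and-mult}, the kernel projection $P_n=\mathbf 1_{\{0\}}(\Delta_n)$ gives $m_n^{(2)}(\rho,\mu)=\nu_{\rho,n,\mu}(\{0\})$, so the whole theorem reduces to the continuity of the atom at $0$.

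The first genuine step is moment convergence, and here only weak convergence is needed. The $j$-th moment $\frac{1}{\dim\rho}\tau_{\mu}(p_\rho\Delta_n^{\,j})$ is the $\mu$-average of a function of the rooted $G$-complex that depends only on the combinatorial ball of radius $2j$ around the root orbit together with the $G$-action on it: the entries of $\Delta_n^{\,j}$ vanish outside that radius, while $p_\rho$ moves simplices only within a single $G$-orbit. As the degree is at most $D$ and $G$ is fixed, there are finitely many such configurations, so this function is bounded and locally constant on $\SC{G}$. Hence $\mu_k\to\mu_\infty$ weakly forces $\int\lambda^{j}\,d\nu_{\rho,n,\mu_k}\to\int\lambda^{j}\,d\nu_{\rho,n,\mu_\infty}$ for every $j\ge 0$ (the case $j=0$ being the total masses). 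Since all these measures are supported on the fixed compact interval $[0,R]$, convergence of all moments yields weak convergence $\nu_{\rho,n,\mu_k}\to\nu_{\rho,n,\mu_\infty}$.

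It remains to pass from weak convergence to convergence of the atom at $0$. One inequality is free: $\{0\}$ is closed, so the portmanteau theorem gives $\limsup_k\nu_{\rho,n,\mu_k}(\{0\})\le\nu_{\rho,n,\mu_\infty}(\{0\})$. For the reverse inequality I would invoke a uniform logarithmic bound near $0$. For a finite $G$-complex $K$ the Laplacian $\Delta_n$ is an integer matrix in the basis of oriented simplices, so the product of its nonzero eigenvalues is a nonzero integer, hence at least $1$ in modulus; dividing by the number of vertices and using $\|\Delta_n\|\le R$ bounds the positive part of $\int\log\lambda$, and one obtains $\int_{(0,1)}\log(1/\lambda)\,d\nu_{n,\mu_K}\le C$ for the \emph{unweighted} measure $\nu_{n,\mu_K}$ (with $\int f\,d\nu_{n,\mu_K}=\tau_{\mu_K}(f(\Delta_n))$), where $C=C(D,n)$. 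This survives under weak limits, so it holds for every sofic $\mu$. Because $0\le p_\rho\le\id$ commutes with $\Delta_n$ we have the domination $\nu_{\rho,n,\mu}\le\frac{1}{\dim\rho}\nu_{n,\mu}$, whence $\nu_{\rho,n,\mu_k}\bigl((0,\epsilon]\bigr)\le C'/\log(1/\epsilon)$ uniformly in $k$ once each $\mu_k$ is sofic. Applying the portmanteau theorem to the open set $[0,\epsilon)$ and writing $\nu_k(\{0\})=\nu_k([0,\epsilon))-\nu_k((0,\epsilon))$ gives
\[ \liminf_k\nu_{\rho,n,\mu_k}(\{0\})\ \ge\ \nu_{\rho,n,\mu_\infty}\bigl([0,\epsilon)\bigr)-\frac{C'}{\log(1/\epsilon)}\ \ge\ \nu_{\rho,n,\mu_\infty}(\{0\})-\frac{C'}{\log(1/\epsilon)}, \]
and letting $\epsilon\to 0$ yields $\liminf_k\nu_{\rho,n,\mu_k}(\{0\})\ge\nu_{\rho,n,\mu_\infty}(\{0\})$. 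Combined with the free upper bound this proves $\lim_k m_n^{(2)}(\rho,\mu_k)=m_n^{(2)}(\rho,\mu_\infty)$.

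The main obstacle is exactly this uniform logarithmic estimate near zero: weak convergence never controls an atom from below, and without soficity the mass of $\nu_{\rho,n,\mu_k}$ could slide down to $0^+$ and disappear in the limit. The integrality of the combinatorial Laplacian, together with the fact that soficity presents each $\mu_k$ as a weak limit of finite complexes, is precisely what forbids this; this is the same determinant mechanism underlying the $\ell^2$-Betti number continuity in \cite{schroedl}, and the only new inputs are the domination of the $\rho$-weighted measure by the unweighted one and the observation that the extra factor $p_\rho$ leaves the moments local.
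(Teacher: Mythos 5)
Your argument is correct, and on the decisive step it takes a genuinely different route from the paper. The paper works with the $\rho$-Laplacian $\Delta_{n,\rho}=(\id-P_\rho)+\Delta_n$ and its honest spectral measure with respect to the unweighted trace; the price is that the matrix of $\Delta_{n,\rho}$ on a finite complex has entries only in $\frac{1}{|G|}\mathcal{O}_E$ for a splitting field $E$, so the lower bound on the Fuglede--Kadison determinant is obtained by multiplying the lowest nonzero coefficient of the characteristic polynomial over all Galois conjugates, bounding the conjugate factors via the operator norm, and extracting a bound depending only on $n$, $D$ and $G$. You instead keep the integer operator $\Delta_n$ and twist the \emph{trace} by the central projection $p_\rho$: the atom at $0$ of $\frac{1}{\dim\rho}\tr(p_\rho\,\cdot)$ applied to the spectral resolution of $\Delta_n$ is exactly $m_n^{(2)}(\rho,\mu)$ (since $p_\rho$ and $\pr_{\ker\Delta_n}$ are commuting projections in $\mathcal{A}_n(\mu)$, their product projects onto $\HH_n(\SC{G},\mu)_\rho$), the moments $\tr(p_\rho\Delta_n^j)$ are still local so weak convergence of measures follows as in Lemma~\ref{lem:weak-convergence}, and the tail bound near $0$ is inherited from the \emph{unweighted} measure by the domination $\tr(p_\rho E(B))\le\tr(E(B))$. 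This reduces the hard input to the classical integrality argument for $\Delta_n$ (product of nonzero eigenvalues of an integer positive semidefinite matrix is a positive integer) and eliminates the algebraic number theory entirely; what the paper's formulation buys in exchange is a single self-adjoint operator whose kernel is the isotypic harmonic space, which is convenient for stating Lemma~\ref{lem:laplace-bounded-selfadjoint} and for the spectral-measure formalism.

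Two small points you should make explicit. First, your sentence ``this survives under weak limits'' hides a step: $\lambda\mapsto\log(1/\lambda)\mathbf{1}_{(0,1)}(\lambda)$ is neither bounded nor continuous, so to transfer the bound $\nu_{n,\mu}((0,\epsilon])\le C/\log(1/\epsilon)$ from laws of finite complexes to an arbitrary sofic $\mu_k$ you need the usual portmanteau manipulation ($\nu((0,\epsilon])\le\liminf_j\nu_j([0,\epsilon'))-\limsup_j\nu_j(\{0\})\le C/\log(1/\epsilon')$ and then $\epsilon'\downarrow\epsilon$), or alternatively reduce at the outset to the case where each $\mu_k$ is the law of a finite complex by a diagonal argument, as the paper does. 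Second, positivity and countable additivity of the weighted set function $B\mapsto\tr(p_\rho E(B))$ follow from $p_\rho E(B)$ being a projection and from normality of the trace; this is worth a line since it is what licenses calling $\nu_{\rho,n,\mu}$ a measure and applying portmanteau to it.
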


Along the way (in Section \ref{sec:induced-complexes})
we investigate induced $G$-complexes. Given a subgroup $H \leq G$ and a simplicial $H$-complex $L$, it is natural to construct
a simplicial $G$-complex $K = G \times_H L$ \emph{by inducing the action from $H$ to $G$}.
This can be promoted to an operation $\ind_H^G$ which takes random rooted $H$-complexes to random rooted $G$-complexes. We provide a criterion to decide whether a sequence of finite $G$-complexes converges to an induced random rooted $G$-complex; see Proposition~\ref{prop:criterion-convergence-induced}.
This is relevant, since the $\ell^2$-multiplicities of the induced random rooted complex $\ind_H^G(\mu)$ can be computed from the $\ell^2$-multiplicities of $\mu$ (provided $\mu$ is sofic).  As a special case, for $H=\{1\}$, we have the following result. 
\begin{theorem}\label{thm:multiplicities-Betti}
  Let $\mu$ be a sofic random rooted simplicial complex and let $G$ be a finite group.
  For all $\rho \in \Irr(G)$ and $n\in\bbN_0$ the following identity holds:
  \[ m_n^{(2)}(\rho,\ind_1^G(\mu)) = \frac{\dim_\bbC(\rho)}{|G|} b_n^{(2)}(\mu).\]
\end{theorem}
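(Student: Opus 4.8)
The plan is to prove Theorem~\ref{thm:multiplicities-Betti} by understanding the induced complex $\ind_1^G(\mu)$ concretely and then tracing how the $\ell^2$-multiplicity of $\rho$ decomposes.

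Let me think about what the induced complex looks like. We start with a random rooted simplicial complex $\mu$ (no group action, i.e. $H = \{1\}$). Inducing from the trivial group to $G$ means forming $G \times_H L = G \times L$ — that is, we take $|G|$ disjoint copies of $L$, indexed by elements of $G$, and $G$ permutes these copies by left multiplication (the regular representation on the copies).

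So the induced complex is essentially a disjoint union of $|G|$ copies of the underlying complex, with $G$ acting by freely permuting the copies. The homology $H_n(G \times L, \bbC)$ is then $\bbC[G] \otimes H_n(L, \bbC)$ as a $G$-representation, where $\bbC[G]$ is the regular representation — because the $|G|$ copies of each homology class get permuted exactly as $G$ permutes itself.

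The key representation-theoretic fact: in the regular representation $\bbC[G]$, every irreducible $\rho$ appears with multiplicity $\dim_\bbC(\rho)$. So for a finite complex $L$, the multiplicity of $\rho$ in $H_n(G \times L, \bbC) = \bbC[G] \otimes H_n(L,\bbC)$ should be $\dim_\bbC(\rho) \cdot \dim_\bbC H_n(L, \bbC)$.

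Then I normalize by the number of vertices. The induced complex $G \times L$ has $|G| \cdot |L^{(0)}|$ vertices, so the ordinary normalized multiplicity is $\frac{\dim_\bbC(\rho) \cdot \dim H_n(L)}{|G| \cdot |L^{(0)}|} = \frac{\dim_\bbC(\rho)}{|G|} \cdot \frac{\dim H_n(L)}{|L^{(0)}|} = \frac{\dim_\bbC(\rho)}{|G|} b_n(L)/|L^{(0)}|$, matching the claimed formula for finite complexes.

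=== PROOF PROPOSAL ===

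\begin{proof}[Proof plan]
The plan is to understand the induced complex $\ind_1^G(\mu)$ concretely and then trace how the $\ell^2$-multiplicity of $\rho$ in its homology decomposes via the structure of the regular representation. I would first treat the case of a finite complex and then pass to the general sofic case by continuity.

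The key structural observation is that inducing from the trivial group $H = \{1\}$ to $G$ produces a particularly simple $G$-complex. For a finite simplicial complex $L$, the induced complex $G \times_{\{1\}} L = G \times L$ is a disjoint union of $|G|$ copies of $L$, indexed by the elements of $G$, on which $G$ acts by freely permuting the copies via left multiplication. Consequently, on homology we obtain an isomorphism of $G$-representations
\[ H_n(G \times L, \bbC) \cong \bbC[G] \otimes_\bbC H_n(L,\bbC), \]
where $\bbC[G]$ is the regular representation and $G$ acts only on the first tensor factor. The first step of the proof is therefore to verify this isomorphism carefully, either directly on the level of simplicial chain complexes (noting that the chain complex of $G \times L$ is $\bbC[G] \otimes_\bbC C_*(L)$ with $G$ permuting the copies) or by invoking the standard computation of the homology of a free $G$-CW-complex induced from a point.

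Next I would invoke the classical fact that in the regular representation every irreducible $\rho \in \Irr(G)$ occurs with multiplicity exactly $\dim_\bbC(\rho)$. Combined with the tensor decomposition above, this yields
\[ m(\rho, H_n(G\times L, \bbC)) = \dim_\bbC(\rho) \cdot \dim_\bbC H_n(L,\bbC) = \dim_\bbC(\rho)\cdot b_n(L), \]
since $H_n(L,\bbC)$ carries the trivial $G$-action and thus simply multiplies the multiplicity of $\rho$ by its dimension. Normalizing by the number of vertices $|(G\times L)^{(0)}| = |G|\cdot|L^{(0)}|$ and applying the formula of Example~\ref{ex:finite-complex-multiplicities} gives
\[ m_n^{(2)}(\rho, \mu_{G\times L}) = \frac{\dim_\bbC(\rho)\cdot b_n(L)}{|G|\cdot|L^{(0)}|} = \frac{\dim_\bbC(\rho)}{|G|}\, b_n^{(2)}(\mu_L), \]
which is exactly the claimed identity for laws of finite complexes.

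To upgrade from finite complexes to an arbitrary sofic random rooted complex $\mu$, I would approximate $\mu$ weakly by laws $\mu_{L_k}$ of finite complexes (using soficity), apply the finite-complex identity to each $L_k$, and pass to the limit. Here both sides are continuous: the right-hand side converges because $b_n^{(2)}$ is Benjamini-Schramm continuous on sofic complexes by the central result of \cite{schroedl}, and the left-hand side converges by Theorem~\ref{thm:approximation}, provided the operation $\ind_1^G$ sends a weakly convergent sofic sequence to a weakly convergent sofic sequence with the expected limit. I expect the main obstacle to lie precisely in this last point --- ensuring that $\ind_1^G$ is compatible with weak convergence and preserves soficity --- rather than in the algebra. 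This should follow from the general criterion for convergence of induced complexes established in Proposition~\ref{prop:criterion-convergence-induced}, so the technical heart of the argument is to check that the hypotheses of that proposition are met in the special case $H=\{1\}$ and to confirm that $\ind_1^G(\mu)$ is itself sofic whenever $\mu$ is.
\end{proof}
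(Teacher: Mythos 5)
Your proposal is correct and follows essentially the same route as the paper: the paper proves the more general reciprocity formula (Theorem~\ref{thm:reciprocity}) by writing $G\times_H K$ as $|G/H|$ permuted copies of $K$, identifying $H_n(G\times_H K,\bbC)\cong\bbC[G]\otimes_{\bbC[H]}H_n(K,\bbC)$, applying Frobenius reciprocity, and passing to the limit via soficity and Theorem~\ref{thm:approximation} --- your regular-representation computation is exactly the $H=\{1\}$ specialization of this. The only small correction is that the weak continuity of $\ind_1^G$ on measures is handled by the elementary Lemma~\ref{lem:continuity-of-ind} (push-forward under a $1$-Lipschitz map), not by the convergence criterion of Proposition~\ref{prop:criterion-convergence-induced}, which addresses the different question of recognizing when a sequence of finite $G$-complexes has an induced limit.
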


In the topological setting $L^2$-multiplicities of CW-complexes
have been defined and studied in \cite{Kionke18}. Our results are mainly complementary to the results therein. However, for towers of finite sheeted covering spaces of simplicial complexes
our results provide a different approach to \cite[Theorem 1.2]{Kionke18}.
In this setting, we think that our investigation of induced complexes is useful.
Combining Theorem~\ref{thm:multiplicities-Betti} with Proposition~\ref{prop:towers-induced} provides a
new perspective on the centralizer condition which remained a bit obscure in \cite[Proposition 5.5]{Kionke18}.

\section{Random rooted simplicial $G$-complexes}\label{sec:random-rooted-complexes}
We will use the language of simplicial complexes as introduced in many standard texts; for instance \cite{Spanier81}. The $n$-skeleton of a simplicial complex $K$ is denoted by $K^{(n)}$. The number of $1$-simplices containing a vertex $x \in K^{(0)}$ is called the \emph{degree} of $x$. The supremum over the degrees of all vertices of $K$ will be called the \emph{vertex degree} of $K$.
To avoid some technical issues we will consider simplicial complexes whose vertex degree is bounded above by some large constant $D >0$.
For a simplicial complex $K$, a non-empty set of vertices $V\subseteq K^{(0)}$ and $r>0$ we define
$B_r(K,V)$ to be the full subcomplex on the set of vertices of path distance
at most $r$ from a vertex in $V$.

We fix a finite group $G$.
A \emph{rooted simplicial $G$-complex} is a pair $(K,o)$ consisting of a simplicial complex $K$ with $G$-action and a $G$-orbit $o$ of vertices of $K$ such that every connected component of $K$ contains at least one vertex in $o$. Two rooted simplicial $G$-complexes $(K,o)$, $(L,o')$ are isomorphic if there is a simplicial isomorphism $f\colon K\to L$ such that $f(o)=o'$ and $g\cdot f(x)=f(g\cdot x)$ for all $x\in K^{(0)}$ and $g\in G$; in this case we write
$(K,o) \gcong (L,o')$.

We denote by $\SC{G}$ \emph{the space of isomorphism classes of rooted simplicial $G$-complexes} with vertex degree bounded by $D$.
The following defines a metric on $\SC{G}$:
\[d([K,o],[L,o'])=\inf\left\{\frac{1}{2^r}\;\middle|\; B_r(K,o)\gcong B_r(L,o')\right\}\]
for $[K,o],[L,o']\in\SC{G}$
Note that subcomplexes of the form $B_r(K,o)$ are stable under the $G$-action.
With the induced topology $\SC{G}$ is a compact totally disconnected space. For every finite rooted simplicial $G$-complex $\alpha$ of
radius at most $r$, we obtain an open subset $U(\alpha,r) \subseteq \SC{G}$
consisting of all isomorphism classes of rooted simplicial $G$-complexes $[K,o]$ such that $B_r(K,o) \gcong \alpha$. The sets $U(\alpha,r)$ are open and compact and provide a basis for the topology.

Given two rooted simplicial $G$-complexes $(K,o)$ and $(L,o')$, such that $o$ and $o'$ are not isomorphic as $G$-sets, then $d([K,o],[L,o']) = \infty$.
Let $\Orb(G)$ denote the finite set of isomorphism classes of transitive $G$-sets.
The space $\SC{G}$ splits into a disjoint union
\[\SC{G}=\bigsqcup_{X \in \Orb(G)} \SCo{G}{X},\]
where  $\SCo{G}{X}$ is the subspace of all $[K,o]$ where $o$ belongs to $X$.

In the same manner we define the space $\SCC{G}$ of isomorphism classes of doubly rooted simplicial $G$-complexes $(K,o,o')$ consisting of a simplicial $G$-complex $K$ and two orbits $o$ and $o'$ of vertices which touch every connected component.
\begin{definition}
  A \emph{random rooted simplicial $G$-complex} is a \emph{unimodular} probability measure $\mu$ on $\SC{G}$, where unimodular means
\begin{align*}
\int_{\SC{G}}\sum_{x \in K^{(0)}}f([K,o,Gx])d\mu([K,o])=\int_{\SC{G}}\sum_{x \in K^{(0)}}f([K,Gx,o])d\mu([K,o])
\end{align*}
for all Borel measurable functions $f\colon \SCC{G}\to \bbR_{\ge 0}$.
\end{definition}
\begin{remark}
  A weak limit of  random rooted simplicial $G$-complexes is again a random rooted simplicial $G$-complex. This can be deduced by observing
  that a probability measure $\mu$ on $\SC{G}$ is unimodular
  if and only if for all $n \in \bbN$ and all \emph{continuous} functions $f \colon \SCC{G} \to \bbR$
  the equality
  \[\int_{\SC{G}}\sum_{{\substack{x \in K^{(0)} \\ d(x,o) \leq n}}}f([K,o,Gx])d\mu([K,o])=\int_{\SC{G}}\sum_{\substack{x \in K^{(0)} \\ d(x,o) \leq n}}f([K,Gx,o])d\mu([K,o]) \]
  holds, where the sum now runs over the vertices of path distance at most $n$ from the root orbit.
  This can be verified using the theorem of monotone convergence and the monotone class theorem.
\end{remark}

\begin{example}\label{example-finiterrsc}
Let $K$ be a finite simplicial $G$-complex. Then there is a unique random rooted simplicial $G$-complex which is fully supported on isomorphism classes of the form $[K,o]$; it is given by
\begin{align*}
\mu^G_K:=\sum_{x\in K^{(0)}}\frac{\delta_{[K,Gx]}}{|K^{(0)}|},
\end{align*}
where $\delta_{[K,Gx]}$ denotes the Dirac measure of the point $[K,Gx]$. That $\mu^G_K$ is a probability measure is obvious, we only have to verify that it is unimodular:
\begin{align*}
\int_{\SC{G}}\sum_{x\in L^{(0)}}f([L,o,Gx])d\mu_K^G([L,o])&=\sum_{y\in K^{(0)}}\frac{1}{|K^{(0)}|}\sum_{x\in K^{(0)}}f([K,Gy,Gx]) \\
&=\sum_{x\in K^{(0)}}\frac{1}{|K^{(0)}|}\sum_{y\in K^{(0)}}f([K,Gy,Gx])\\
&=\int_{\SC{G}}\sum_{y\in L^{(0)}}f([L,Gy,o])d\mu^G_K([L,o]).
\end{align*}
\end{example}
\begin{definition}\label{def:sofic}
A sequence $(K_n)_n$ of finite simplicial $G$-complexes (of vertex degree bounded by $D$) converges \emph{Benjamini-Schramm} if the weak limit $\lim_{n\to\infty}\mu_{K_n}^G$ exist. A random rooted simplicial $G$-complex on $\SC{G}$ which is the Benjamini-Schramm limit of a sequence of finite simplicial $G$-complexes is called  \emph{sofic}.
\end{definition}

\section{Induction of simplicial complexes}\label{sec:induced-complexes}
Let $G$ be a finite group and let $H \leq G$ be a subgroup.
Given an $H$-set $Y$ one can construct the induced $G$-set $G \times_H Y$.
The set
\begin{equation*}
  G\times_H Y = (G \times Y) /\sim
\end{equation*}
is obtained by forming the quotient of $G \times Y$ under the equivalence relation
$(g,y) \sim (gh,h^{-1}\cdot y)$ for all $g \in G$, $y \in Y$ and $h \in H$.
We write $\lfloor g,y \rfloor$ to denote the equivalence class of $(g,y)$.
Clearly $g_2 \cdot \lfloor g_1,y \rfloor = \lfloor g_2g_1, y\rfloor$ defines an action of $G$ on $G\times_H Y$.

A simplicial $H$-complex $K$ gives rise to a simplicial $G$-complex $G \times_H K$ by induction.
The vertices of $G \times_H K$  are the elements of $G \times_H K^{(0)}$. For every simplex $\sigma \subseteq K^{(0)}$ of $K$ and every $g\in G$ we define a simplex
$\sigma_g = \{\lfloor g, x \rfloor \mid x \in \sigma \}$ of $G\times_H K$. As a simiplicial complex (without $G$-action) $G \times_H K$ is isomorphic
to a disjoint union of $|G/H|$ copies of $K$. In particular, induction does not alter the vertex degree.
\begin{lemma}\label{lem:continuity-of-ind}
The function
   $\ind_H^G \colon \SC{H} \to \SC{G}$
 which maps $[K,o]$ to $[G\times_H K, G\lfloor 1, o\rfloor]$
 is continuous. In particular, the push-forward of measures with $\ind_H^G$ is weakly continuous.
\end{lemma}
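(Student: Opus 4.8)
The plan is to reduce everything to the single structural observation that induction commutes, $G$-equivariantly, with passing to balls around the root:
\[
B_r\bigl(G\times_H K,\, G\lfloor 1,o\rfloor\bigr)\;\cong_G\; G\times_H B_r(K,o),
\]
where crucially the right-hand side depends on $(K,o)$ only through the rooted $H$-complex $B_r(K,o)$. Granting this identity, continuity is immediate and in fact gives the non-expanding estimate $d(\ind_H^G[K,o],\ind_H^G[L,o'])\le d([K,o],[L,o'])$: whenever $B_r(K,o)$ and $B_r(L,o')$ are isomorphic as rooted $H$-complexes, functoriality of $G\times_H(-)$ yields $G\times_H B_r(K,o)\cong_G G\times_H B_r(L,o')$, hence $B_r(\ind_H^G(K,o))\cong_G B_r(\ind_H^G(L,o'))$, so every admissible radius for the source is admissible for the images; taking the infimum of $2^{-r}$ over admissible radii gives the inequality.

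To establish the displayed identity I would first record the disjoint-copy structure of the induced complex. Every simplex of $G\times_H K$ has the form $\sigma_g=\{\lfloor g,x\rfloor\mid x\in\sigma\}$, so all of its vertices lie in the single copy indexed by the coset $gH$; consequently the $|G/H|$ copies are pairwise disjoint subcomplexes, no simplex joins two of them, and the path distance between vertices in different copies is infinite. The map $x\mapsto\lfloor g,x\rfloor$ identifies the copy indexed by $gH$ with $K$, and under this identification the root vertices of $G\lfloor 1,o\rfloor$ lying in that copy correspond exactly to $o$ (here one uses that $o$ is $H$-invariant, so $\lfloor gh,y\rfloor=\lfloor g,hy\rfloor$ stays among the roots). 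Hence the distance from $\lfloor g,x\rfloor$ to the root orbit is computed entirely inside the copy and equals $d_K(x,o)$.

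From this locality the identification follows by matching vertices and simplices. A vertex $\lfloor g,x\rfloor$ lies in $B_r(G\times_H K, G\lfloor 1,o\rfloor)$ iff $d_K(x,o)\le r$, i.e.\ iff $x\in B_r(K,o)^{(0)}$; and a simplex $\sigma_g$ lies in the ball iff every $x\in\sigma$ satisfies $d_K(x,o)\le r$, i.e.\ iff $\sigma$ is a simplex of the full subcomplex $B_r(K,o)$. These are precisely the vertices and simplices of $G\times_H B_r(K,o)$, and the label-preserving bijection $\lfloor g,x\rfloor\mapsto\lfloor g,x\rfloor$ is a simplicial isomorphism intertwining the actions $g'\cdot\lfloor g,x\rfloor=\lfloor g'g,x\rfloor$ and matching the root orbits. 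The same bookkeeping shows $G\lfloor 1,o\rfloor$ meets every component of $G\times_H K$, so $\ind_H^G[K,o]$ is a genuine rooted $G$-complex and the map is well defined.

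Finally, the ``in particular'' is the standard fact that a continuous map induces a weakly continuous push-forward of probability measures: for bounded continuous $f\colon\SC{G}\to\bbR$ the composite $f\circ\ind_H^G$ is bounded and continuous, so $\int f\,d\bigl((\ind_H^G)_*\mu_k\bigr)=\int (f\circ\ind_H^G)\,d\mu_k\to\int(f\circ\ind_H^G)\,d\mu=\int f\,d\bigl((\ind_H^G)_*\mu\bigr)$ whenever $\mu_k\to\mu$ weakly. I expect the only genuine obstacle to be the displayed equivariant identity, and within it the single point requiring care: that the path metric on $G\times_H K$ never crosses between distinct cosets, and that the $H$-invariance of $o$ makes the root orbit restrict copy-by-copy to $o$. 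Once this locality is in hand, the remainder is routine bookkeeping.
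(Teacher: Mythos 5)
Your proof is correct and follows essentially the same route as the paper: the paper's entire argument is the observation that $B_r(G\times_H K, G\lfloor 1,o\rfloor)\cong G\times_H B_r(K,o)$, from which the non-expanding estimate and hence continuity and weak continuity of the push-forward follow. You simply supply the bookkeeping (disjointness of the $|G/H|$ copies, restriction of the root orbit to each copy) that the paper leaves implicit.
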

\begin{proof}
 We observe that the ball of radius $r$ in $G\times_H K$ around $G\lfloor 1, o\rfloor$ is
 isomorphic to $G \times_H B_r(K,o)$.
 This implies that \[d\bigl(\ind_H^G([K,o]),\ind_H^G([L,o'])\bigr) \leq d([K,o],[L,o'])\] and proves the assertion.
\end{proof}

 We use this to define induction of random rooted simplicial complexes.
 \begin{lemma}
   Let $\mu$ be a random rooted simplicial $H$-complex. The push-forward measure $\ind_H^G(\mu)$
   is a random rooted simplicial $G$-complex.
 \end{lemma}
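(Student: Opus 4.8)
The plan is to establish two things: that $\nu := \ind_H^G(\mu)$ is a probability measure, and that it is unimodular. The first point is immediate, since $\ind_H^G \colon \SC{H}\to\SC{G}$ is continuous by Lemma~\ref{lem:continuity-of-ind}, hence Borel measurable, and the push-forward of a probability measure under a measurable map is again a probability measure. The substance lies in unimodularity, and the strategy is to rewrite the defining integral identity for $\nu$ as an instance of the unimodularity of $\mu$ via the change-of-variables formula for push-forwards.

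First I would set up the combinatorial dictionary relating the vertices of $G\times_H L$ to those of $L$. Fixing coset representatives $g_1,\dots,g_m$ for $G/H$ with $m=|G/H|$, every vertex of $G\times_H L$ can be written uniquely as $\lfloor g_i,x\rfloor$ with $x\in L^{(0)}$, so that $(i,x)\mapsto\lfloor g_i,x\rfloor$ is a bijection and the vertex sum $\sum_{x'\in(G\times_H L)^{(0)}}$ becomes $\sum_{i=1}^m\sum_{x\in L^{(0)}}$. The key observation is that $G\lfloor g_i,x\rfloor = G\lfloor 1,x\rfloor$ is independent of $i$ and depends only on the $H$-orbit $Hx$, so summing over $i$ produces an overall factor $m$. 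Moreover, writing a root orbit of $L$ as $p=Hq$, the doubly rooted complex $[G\times_H L,\,G\lfloor 1,q\rfloor,\,G\lfloor 1,x\rfloor]$ is exactly the image of $[L,p,Hx]$ under the induction map $\SCC{H}\to\SCC{G}$ defined as in Lemma~\ref{lem:continuity-of-ind} but remembering the second orbit; this map is again continuous, hence Borel measurable.

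With this dictionary, the computation runs as follows. For Borel measurable $f\colon\SCC{G}\to\bbR_{\ge 0}$, set $\tilde f := f\circ\ind_H^G\colon\SCC{H}\to\bbR_{\ge 0}$. Applying the change-of-variables formula and then the dictionary gives
\[\int_{\SC{G}}\sum_{x'\in K^{(0)}}f([K,o,Gx'])\,d\nu([K,o]) = m\int_{\SC{H}}\sum_{x\in L^{(0)}}\tilde f([L,p,Hx])\,d\mu([L,p]),\]
and the identical manipulation with the two roots exchanged gives
\[\int_{\SC{G}}\sum_{x'\in K^{(0)}}f([K,Gx',o])\,d\nu([K,o]) = m\int_{\SC{H}}\sum_{x\in L^{(0)}}\tilde f([L,Hx,p])\,d\mu([L,p]).\]
The two right-hand sides agree by the unimodularity of $\mu$ applied to the test function $\tilde f$, which proves that $\nu$ is unimodular.

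The step that I expect to require the most care is the combinatorial dictionary itself: one must verify that $\lfloor g_i,x\rfloor$ ranges bijectively over the vertices of $G\times_H L$, that passing to $G$-orbits collapses the coset index $i$ and thereby contributes the factor $m=|G/H|$ symmetrically on both sides (where it cancels), and that the induced doubly rooted complex is genuinely independent of the chosen representative $q$ of $p$ and of $x$ within $Hx$, so that the map $\SCC{H}\to\SCC{G}$ is well defined. Everything else is a formal consequence of the push-forward formula together with the unimodularity hypothesis on $\mu$.
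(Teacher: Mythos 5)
Your proposal is correct and follows essentially the same route as the paper: both reduce to a probability measure statement plus unimodularity, both use the change-of-variables formula for the push-forward, both exploit that $G\lfloor g_i,x\rfloor = G\lfloor 1,x\rfloor$ to collapse the vertex sum over $G\times_H L$ to $|G/H|$ times a sum over $L^{(0)}$, and both then invoke the unimodularity of $\mu$ for the pulled-back test function on $\SCC{H}$. Your extra care about the well-definedness of the doubly rooted induction map is a point the paper leaves implicit, but it is not a different argument.
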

 \begin{proof}
   The push-forward preserves the total mass, so $\ind_H^G(\mu)$ is a probability measure.
   It remains to verify unimodularity.
   Recall the general transformation rule \cite[\S 6]{Bourbaki-Integration}
   \[\int_{\SC{G}} t(z) d\ind_H^G(\mu)(z) = \int_{\SC{H}} t(\ind_H^G(w)) d\mu(w)\]
   for all measurable nonnegative functions $t$ on $\SC{G}$.
   We obtain for all measurable $f \colon \SCC{G} \to \bbR_{\geq 0}$
   \begin{align*}
     \int_{\SC{G}} \sum_{x \in L^{(0)}} &f(L,o, Gx) d\ind_H^G(\mu)([L,o])\\
     \stackrel{(1)}{=} &\int_{\SC{H}} \sum_{x \in G \times_H K^{(0)}} f(G\times_H K, Go, Gx) d\mu([K,o])\\
     =  &\int_{\SC{H}} |G/H|\sum_{y \in K^{(0)}} f(G\times_H K, Go, G\lfloor 1,y \rfloor ) d\mu([K,o])\\
     \stackrel{\text{(2)}}{=}   &\int_{\SC{H}} |G/H|\sum_{y \in K^{(0)}} f(G\times_H K, G\lfloor 1,y \rfloor, Go ) d\mu([K,o])\\
     \stackrel{(3)}{=}
     &\int_{\SC{G}} \sum_{x \in L^{(0)}} f(L, Gx, o) d\ind_H^G(\mu)([L,o])\\
   \end{align*}
   where we use the transformation rule in steps (1) and (3), and the unimodularity of $\mu$ in step (2).
 \end{proof}

 The following criterion is useful to show that a sequence of finite simplicial $G$-complexes converges to an induced random rooted simplicial $G$-complex.
 \begin{proposition}\label{prop:criterion-convergence-induced}
   Let $(K_n)_n$ be a sequence of finite simplicial $G$-complexes with vertex degree bounded by $D$.
   Assume that the sequence of random rooted simplical $H$-complexes
   $(\mu_{K_n}^H)_{n}$ converges to a random rooted simplicial $H$-complex $\mu_{\infty}$ for some subgroup $H \leq G$.
   Then $(\mu_{K_n}^G)_n$ converges to  $\ind_H^G(\mu_\infty)$ on $\SC{G}$ if and only if
   \begin{equation}\label{eq:points-moved}
     \lim_{n\to\infty} \frac{|E(K_n,g,C)|}{|K_n^{(0)}|} = 0 
     \end{equation}
     for all $C>0$ and all $g \in G \setminus H$ where
     $E(K,g,C) = \{ x \in K^{(0)} \mid d(x,gx) \leq C\}$.
 \end{proposition}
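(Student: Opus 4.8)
The strategy is to compare $\mu_{K_n}^G$ with the induced measures $\ind_H^G(\mu_{K_n}^H)$. By Lemma~\ref{lem:continuity-of-ind} and the hypothesis $\mu_{K_n}^H \to \mu_\infty$, the latter converge to $\ind_H^G(\mu_\infty)$; hence $\mu_{K_n}^G \to \ind_H^G(\mu_\infty)$ if and only if $\mu_{K_n}^G$ and $\ind_H^G(\mu_{K_n}^H)$ have the same limit. Since the sets $U(\alpha,r)$ form a basis of clopen sets of the compact space $\SC{G}$, locally constant functions are uniformly dense in the continuous functions, so weak convergence is equivalent to convergence of the measures of every $U(\alpha,r)$. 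Using Example~\ref{example-finiterrsc} and the transformation rule, both $\mu_{K_n}^G(U(\alpha,r))$ and $\ind_H^G(\mu_{K_n}^H)(U(\alpha,r))$ are averages over $x \in K_n^{(0)}$ of the indicator that $B_r(K_n,Gx) \gcong \alpha$, respectively that the $r$-ball $G\times_H B_r(K_n,Hx)$ of $\ind_H^G([K_n,Hx])$ is isomorphic to $\alpha$. Everything thus reduces to comparing these two balls for each root $x$.

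The geometric heart is the $G$-equivariant simplicial map
\[ \phi\colon G\times_H B_r(K_n,Hx) \to B_r(K_n,Gx), \qquad \lfloor g,y\rfloor \mapsto g\cdot y, \]
which always carries the root orbit to the root orbit. I claim $\phi$ is an isomorphism whenever $x \notin \bigcup_{g \in G\setminus H} E(K_n,g,2r+1)$. A failure of injectivity on vertices, or a simplex of $B_r(K_n,Gx)$ with vertices in two distinct blocks $g_iH \neq g_jH$, produces $y,y' \in B_r(K_n,Hx)$ with $d(g_iy,g_jy') \le 1$; choosing $h,h' \in H$ with $d(y,hx),d(y',h'x)\le r$ and applying the isometry $(g_jh')^{-1}$ gives $d(\gamma x,x)\le 2r+1$ with $\gamma = (g_jh')^{-1}g_ih \notin H$, i.e. $x \in E(K_n,\gamma,2r+1)$. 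As $B_r(K_n,Hx)$ is a full subcomplex, the absence of such collisions makes $\phi$ bijective on vertices and surjective on simplices, hence an isomorphism. This settles the forward implication: if \eqref{eq:points-moved} holds, then for each fixed $r$ the density of $\bigcup_{g\in G\setminus H}E(K_n,g,2r+1)$ tends to $0$, every remaining root contributes equally to both measures of $U(\alpha,r)$, and the difference is bounded by this vanishing density.

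For the converse I would test against one continuous function. For $g\in G$ and $C>0$, let $\Phi_{g,C}([K,o])$ be the fraction of vertices $p$ in the root orbit $o$ with $d(p,gp)\le C$; as a geodesic of length $\le C$ between root vertices stays inside $B_C(K,o)$, this depends only on the $C$-ball and is continuous. Grouping vertices of a finite complex by $G$-orbit gives $\int \Phi_{g,C}\,d\mu_{K_n}^G = |E(K_n,g,C)|/|K_n^{(0)}|$. The same count on $G\times_H K_n$, via the transformation rule and the fact that every $G$-orbit meets the base copy of $K_n$ in exactly a fraction $1/[G:H]$ of its points, gives $\int \Phi_{g,C}\, d\,\ind_H^G(\mu_{K_n}^H) = |E(G\times_H K_n,g,C)|/|(G\times_H K_n)^{(0)}|$. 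Two observations simplify the right-hand side: $\gamma \mapsto |E(K_n,\gamma,C)|$ is a $G$-class function (conjugation by $\delta$ is realised by the isometry $\delta$), and a vertex of $G\times_H K_n$ can be moved a distance $\le C$ by $g$ only within its own block, which forces $(g')^{-1}gg' \in H$. Evaluating the displacement inside a block then yields $|E(G\times_H K_n,g,C)| = N(g)\,|E(K_n,g,C)|$ with $N(g) = \#\{\,g'H \in G/H \mid (g')^{-1}gg' \in H\,\}$, so the second integral equals $\tfrac{N(g)}{[G:H]}\,|E(K_n,g,C)|/|K_n^{(0)}|$.

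Assuming $\mu_{K_n}^G \to \ind_H^G(\mu_\infty)$, both integrals converge to $\int \Phi_{g,C}\,d\,\ind_H^G(\mu_\infty)$, and comparing the two expressions gives $\bigl(1-\tfrac{N(g)}{[G:H]}\bigr)\lim_n |E(K_n,g,C)|/|K_n^{(0)}| = 0$. For $g \in G\setminus H$ the trivial coset is excluded from $N(g)$, so $N(g) \le [G:H]-1 < [G:H]$, and nonnegativity forces $|E(K_n,g,C)|/|K_n^{(0)}| \to 0$, which is \eqref{eq:points-moved}. I expect this converse to be the main obstacle: the naive guess that $\int \Phi_{g,C}\,d\,\ind_H^G(\mu_\infty)$ vanishes for $g\notin H$ is \emph{false}, since an induced complex may well have root vertices moved a short distance by an element of $G\setminus H$ conjugate into $H$. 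What saves the argument is the self-referential identity above, powered by the class-function property of $|E(K_n,\cdot,C)|$ and the block-diagonal nature of displacement in $G\times_H K_n$, which collapses the comparison to a scalar relation with factor $N(g)/[G:H]<1$.
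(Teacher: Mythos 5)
Your argument is correct. The forward direction is essentially the paper's proof: both isolate the exceptional set $\bigcup_{g\in G\setminus H}E(K_n,g,2r+1)$, show that for roots $x$ outside it the $r$-ball around $Gx$ is $G$-isomorphic to $G\times_H B_r(K_n,Hx)$ (your explicit map $\lfloor g,y\rfloor\mapsto gy$ packages the disjointness/no-edge argument the paper gives), and then compare $\mu_{K_n}^G(U(\alpha,r))$ with $\ind_H^G(\mu_{K_n}^H)(U(\alpha,r))$, which the paper writes equivalently as $\mu_{K_n}^H\bigl((\ind_H^G)^{-1}(U(\alpha,r))\bigr)$. The converse is where you genuinely diverge, and to your advantage. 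The paper's converse is two lines: if $x\in E(K_n,g,C)$ with $g\notin H$, the $C$-ball around the root orbit joins $x$ to $gx$ and ``hence is not isomorphic to an induced complex''. As you correctly observe, that implication fails whenever $g$ is conjugate into $H$: in $G\times_H L$ a root vertex $\lfloor g',y'\rfloor$ with $(g')^{-1}gg'=h\in H$ is moved by $g$ to $\lfloor g',hy'\rfloor$ inside its own block, possibly a distance $\le C$ away (take $G=S_3$, $H=\langle(12)\rangle$, $g=(13)$, $g'=(23)$). Your test function $\Phi_{g,C}$, combined with the identity $|E(G\times_H K_n,g,C)|=N(g)\,|E(K_n,g,C)|$ --- which rests on the class-function property of $\gamma\mapsto|E(K_n,\gamma,C)|$ and the block structure of $G\times_H K_n$ --- reduces the converse to a scalar relation with coefficient $1-N(g)/[G:H]\ge 1/[G:H]>0$, and this handles exactly the case the published converse does not address. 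One cosmetic point: rather than writing $\bigl(1-N(g)/[G:H]\bigr)\lim_n|E(K_n,g,C)|/|K_n^{(0)}|=0$, which presupposes the limit exists, say that the difference of the two integrals tends to zero and equals a fixed positive multiple of the nonnegative quantity $|E(K_n,g,C)|/|K_n^{(0)}|$, which therefore tends to zero. In short: same forward direction; a different and more careful converse that in fact repairs a gap in the paper's own argument.
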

 \begin{proof}
   Define $E(K,C) = \bigcup_{g \in G \setminus H} E(K,g,C)$.

   Assume that equation \eqref{eq:points-moved} holds for all $g \in G \setminus H$ and all $C>0$.
   Let $r>0$ be given. We want to verify that for all $x \in K_n^{(0)} \setminus E(K_n,2r+1)$ the ball of radius $r$ around $Gx$ in $K_n$ is isomorphic to $G \times_H B_r(K_n,Hx)$.
    This amounts to showing that
    \[ B_r(K_n,Hx) \cap g B_r(K_n,Hx) = \emptyset\]
    for all $g \in G \setminus H$ and that there is no edge between these two sets.
   Suppose that there is an element in the intersection or an edge between $B_r(K_n,Hx)$ and $g B_r(K_n,Hx)$. In both cases we can
  find $h,h' \in H$ such that $d(hx, gh'x) \leq 2r+1$. However this implies $x \in E(K_n,2r+1)$
  since  $d(x,h^{-1}gh'x) \leq 2r+1$ and $h^{-1}gh' \not\in H$.

  Let $\alpha$ be a rooted simplicial $G$-complex of radius at most $r$.
    Let $\epsilon > 0$ and take $n$ sufficiently large such that
    $|E(K_n,2r+1)|< |K_n^{(0)}| \epsilon$.
    The inverse image $V = (\ind_H^G)^{-1}(U(\alpha,r)) \subseteq \SC{H}$ is a finite (possibly empty) union of
    sets of the form $U(\alpha',r)$; thus it is open and compact.
  The weak convergence $\mu_{K_n}^H \stackrel{w}{\longrightarrow} \mu_\infty$ shows that for all sufficiently large $n$ the inequality  $|\mu_{\infty}(V) - \mu_{K_n}^H(V)| < \epsilon$ holds.
  Moreover, by the observation above
  \[ \left|\mu^G_{K_n}(U(\alpha,r)) - \mu_{K_n}^H(V)\right| \leq   \frac{|E(K_n,2r+1)|}{|K_n^{(0)}|} < \epsilon.\]
  As $\alpha$ was arbitrary, we deduce the convergence  $\mu_{K_n}^G \stackrel{w}{\longrightarrow} \ind_H^G(\mu_\infty)$.
  
  \medskip

  Conversely, suppose that the sequence $(\mu_{K_n}^G)_n$ converges to $\ind_H^G(\mu_\infty)$.
  Let $g \in G\setminus H$, $C >0$ and $x \in E(K_n,g,C)$.
  The ball $B_{C}(K_n,Hx)$ contains a path from $x$ to $gx$  and
  hence it is not isomorphic to a simplicial complex induced from $H$.
  By assumption the limit $\lim_{n\to \infty} \mu_{K_n}^G$ is supported on
  induced complexes and thus \eqref{eq:points-moved} is satisfied.
\end{proof}

\begin{example}[Sierpinski's triangle with rotation]\label{ex:Sierpinski1}
  We describe a sequence $(T_n)_n$ of two-dimensional simplicial complexes which occur in the construction of the fractal Sierpinski triangle.
  It appeared to us that the example becomes clearer if we describe the geometric realizations of the $T_n$ as subsets of $\bbR^2$ instead of working with the abstract simplicial complexes.
  Let $e_1 = (1, 0) \in \bbR^2$ and let $e_2 = \frac{1}{2}(1,\sqrt{3}) \in \bbR^2$.
  The points $0$, $e_1$ and $e_2$ are the vertices of an equilateral triangle $T_0$ with sides of length $1$; we consider $T_0$ to  be a $2$-simplex.
  We define inductively
  \[ T_{n+1} = T_n \cup (T_n+2^n e_1) \cup (T_n+2^n e_2).\]
    \begin{figure}[H]
  \begin{tikzpicture}
    \fill[fill=yellow]
    (0,0) node {}
 -- (1,0) node {}
 -- (0.5,0.86603) node {};
\node[align=left] at (0.5,-0.5) {$T_0$};
\end{tikzpicture}
\begin{tikzpicture}
  \foreach \a in {0,1}
  {
    \foreach \b in {\a,...,1}
    {
      \fill[fill=yellow]
         (\a * 0.5 + \b * 0.5, \b * 0.86603 - \a * 0.86603) node {}
      -- (\a * 0.5 + \b * 0.5 + 1,  \b * 0.86603 - \a * 0.86603) node {}
      -- (\a * 0.5  + \b* 0.5 + 0.5, \b * 0.86603 - \a * 0.86603+ 0.86603) node {};
    }
  }
 \node[align=left] at (1,-0.5) {$T_1$};
\end{tikzpicture}
\begin{tikzpicture}
  \foreach \a in {0,1}
  {
    \foreach \b in {\a,...,1}
    {
      \fill[fill=yellow]
         (\a * 0.5 + \b * 0.5, \b * 0.86603 - \a * 0.86603) node {}
      -- (\a * 0.5 + \b * 0.5 + 1,  \b * 0.86603 - \a * 0.86603) node {}
      -- (\a * 0.5  + \b* 0.5 + 0.5, \b * 0.86603 - \a * 0.86603+ 0.86603) node {};
    }
  }
   \foreach \a in {0,1}
  {
    \foreach \b in {\a,...,1}
    {
      \fill[fill=yellow]
         (2+\a * 0.5 + \b * 0.5, \b * 0.86603 - \a * 0.86603) node {}
      -- (2+\a * 0.5 + \b * 0.5 + 1,  \b * 0.86603 - \a * 0.86603) node {}
      -- (2+\a * 0.5  + \b* 0.5 + 0.5, \b * 0.86603 - \a * 0.86603+ 0.86603) node {};
    }
  }
   \foreach \a in {0,1}
  {
    \foreach \b in {\a,...,1}
    {
      \fill[fill=yellow]
         (1+\a * 0.5 + \b * 0.5, \b * 0.86603 - \a * 0.86603 + 1.7321) node {}
      -- (1+\a * 0.5 + \b * 0.5 + 1,  \b * 0.86603 - \a * 0.86603 + 1.7321) node {}
      -- (1+\a * 0.5  + \b* 0.5 + 0.5, \b * 0.86603 - \a * 0.86603+ 0.86603 + 1.7321) node {};
    }
  }
  \node[align=left] at (2,-0.5) {$T_2$};
\end{tikzpicture}
\end{figure}
  By induction it is easy to verify that $T_n$ is a simplicial complex with
  $\frac{3^{n+1}+3}{2}$ vertices, $3^{n+1}$ edges and $3^n$ $2$-simplices.
  The vertex degree of $T_n$ is $4$ for all $n\geq 1$.
  The three vertices of degree $2$ will be called the \emph{corners} of $T_n$. The distance between two corners of $T_n$ is $2^n$.

  \medskip

  \noindent\emph{Claim:}
  The sequence $(T_n)_n$ converges Benjamini-Schramm to a random rooted simplicial complex $\tau_S$.

  \smallskip

  Let $r >0$ and let $\alpha$ be a finite rooted simplicial complex of radius at most $r$.
  Take $m$ so large that $2^{m-1} > r$. Then any $r$-ball in $T_m$ contains at most one of the three corners of $T_m$.
  Let $N(k,\alpha)$ denote the number of vertices $v$ in $T_{m+k}$
  such that the ball of radius $r$ around $v$ is isomorphic to $\alpha$.
  We observe that
  \[ N(k+1,\alpha) = 3 N(k,\alpha) + c_\alpha\]
  for all $k \geq 0$ for some constant $c_\alpha \in \bbZ$.
  Indeed, $r$-balls around vertices of distance at least $r$ from one of the corners in $T_{m+k}$ occur exactly $3$-times in $T_{m+k+1}$. In the small set of vertices which lie
  close to a corner of $T_{m+k}$, we always see two copies of $T_m$ being glued at a corner.
  Which shows that the effect of this operation does not depend on $k$; compare Figure \ref{fig:small-balls}.
  \begin{figure}[H]\label{fig:small-balls}
  \resizebox{0.4\textwidth}{!}{
%
%

\begin{tikzpicture}
  \foreach \a in {0,1}
  {
    \foreach \b in {\a,...,1}
    {
      \fill[fill=yellow]
         (\a  + \b , \b * 1.732 - \a * 1.732) node {}
      -- (\a  + \b  + 2,  \b * 1.732 - \a * 1.732) node {}
      -- (\a   + \b + 1, \b * 1.732 - \a * 1.732+ 1.732) node {};
    }
  }
   \foreach \a in {0}
  {
    \foreach \b in {\a}
    {
      \fill[fill=yellow]
         (4+\a  + \b , \b * 1.732 - \a * 1.732) node {}
      -- (4+\a  + \b  + 2,  \b * 1.732 - \a * 1.732) node {}
      -- (4+\a   + \b + 1, \b * 1.732 - \a * 1.732+ 1.732) node {};
    }
  }
   \foreach \a in {0}
  {
    \foreach \b in {\a}
    {
      \fill[fill=yellow]
         (2+\a  + \b , \b * 1.732 - \a * 1.732+3.4641) node {}
      -- (2+\a  + \b  + 2,  \b * 1.732 - \a * 1.732+3.4641) node {}
      -- (2+\a   + \b + 1, \b * 1.732 - \a * 1.732+ 1.732+3.4641) node {}; node {};
    }
  }
  \node[align=left] at (1,0.6) {\tiny{$T_m$}};
  \node[align=left] at (2,2.4) {\tiny{$T_m$}};
  \node[align=left] at (3,0.6) {\tiny{$T_m$}};
  \node[align=left] at (3,1.7 + 2.4) {\tiny{$T_m$}};
  \node[align=left] at (5,0.6) {\tiny{$T_m$}};
  \node[align=left] at (4,-0.5) {{$T_{m+k}$}};
  
  \draw[dotted] (3,1.732+3.4641) -- (3.5,1.732+3.4641+0.865);
  \draw[dotted] (6,0) -- (6.5,0);

  \fill[green,opacity=0.3] (2.4,2.4249 + 1.732) arc(60:-120:0.5);
  \fill[green,opacity=0.3](3.5,0) arc (180:0:0.5);
\end{tikzpicture}
%
  \caption{Small balls touch at most two copies of $T_m$}
  \end{figure}
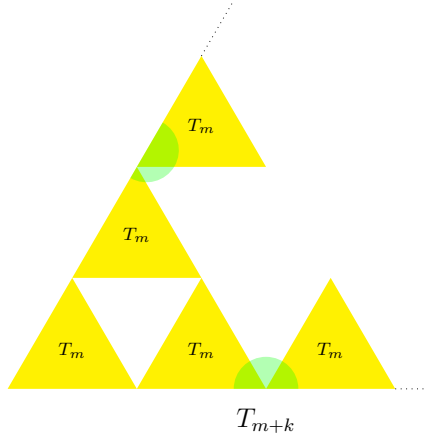
  Now it follows from a short calculation that $\Bigl(\frac{|N(k,\alpha)|}{|T_{m+k}^{(0)}|}\Bigr)_{k}$
  is a Cauchy sequence. Since $r$ and $\alpha$ were arbitrary, we conclude that the sequence $(T_n)_n$ converges in the sense of Benjamini-Schramm.

  \medskip

  Now we introduce an action of the finite cyclic group $G = \langle \sigma \rangle$ of order
  $3$. We let $\sigma$ act by rotation of $2\pi/3$ around the barycenter
  $c_n = 2^{n-1} (1, \sqrt{3}^{-1})$ of $T_n$. All vertices of $T_n$ have Euclidean distance
  at least $\frac{2^{n-2}}{\sqrt{3}}$ from the barycenter. Thus every vertex is moved by an
  Euclidean distance of at least $2^{n-2}$ under the non-trivial rotations $\sigma$ and
  $\sigma^2$, which in particular also holds for the path distance in $T_n$. Proposition
  \ref{prop:criterion-convergence-induced} implies that the sequence $(T_n)_n$ of simplicial
  $G$-complexes converges to the induced random rooted simplicial $G$-complex
  $\ind_{1}^G(\tau_S)$.  Roughly speaking, the sequence converges to three copies of the
  Sierpinski triangle which are permuted cyclically by $G$.
\end{example}

\subsection{Towers of finite sheeted covering spaces}
In this section we discuss a prominent family of examples of Benjamini-Schramm convergent sequences: towers of finite sheeted covering spaces.

Let $\Gamma$ be a group and let $K$ be simplicial complex of vertex degree at most $D$.
Assume that $\Gamma$ acts \emph{simplicially} on $K$, this means that an element $\gamma \in \Gamma$ stabilizes a simplex of $K$ if and only if it stabilizes all of its vertices. Recall that this condition can always be achieved by passing to the barycentric subdivision of $K$. We assume further that the action is \emph{proper} and \emph{cocompact}, i.e., every vertex has a finite stabilizer and there are only finitely many orbits of vertices.

Let $G \leq \Gamma$ be a finite subgroup. For every normal subgroup $N \normal \Gamma$ the quotient simplicial complex $K/N$ carries a $G$-action.
Suppose that $\Gamma$ is residually finite and let $(N_n)_{n\in\bbN}$ be a descending chain of finite index normal subgroups of $\Gamma$ with $\bigcap_{n \in \bbN} N_n = \{1\}$. It is well-known (cf. \cite[Example 19]{schroedl}) that the sequence $(K/N_n)_n$ of finite simplicial complexes (without $G$-action) converges to the random rooted simplicial complex
\[ \frac{1}{w(\Gamma)} \sum_{x \in \mathcal{F}}|\St_\Gamma(x)|^{-1} \delta_{[K,x]}  \]
where $\mathcal{F}$ is a fundamental domain for the action of $\Gamma$ on $K^{(0)}$ and
$w(\Gamma) = \sum_{x \in \mathcal{F}}  |\St_\Gamma(x)|^{-1}$. This measure does not depend on the choice of the fundamental domain.
The purpose of this section is to describe the limit taking the $G$-action into account.

\medskip

We say that an element $\gamma \in \Gamma$ is \emph{FC} if it has a finite conjugacy class, i.e.,
$|\Gamma:C_\Gamma(\gamma)|<\infty$, where $C_\Gamma(\gamma)$ is the centralizer of $\gamma$.
Consider the subgroup $H = \{ g \in G \mid g \text{ is FC in } \Gamma\} \leq G$ of FC-elements which lie in $G$.
Let $\Gamma_0 \leq_{f.i.} \Gamma$ be a finite index subgroup which satisfies
\[ \Gamma_0 \subseteq \bigcap_{h\in H} C_\Gamma(h).\]

\begin{lemma}
  Let $\mathcal{F}_0 \subseteq K^{(0)}$ be a fundamental domain for the action of $\Gamma_0$ on $K^{(0)}$.
  The measure
  \[ \mu_K^H = \frac{1}{w(\Gamma_0)} \sum_{x \in \mathcal{F}_0} |\St_{\Gamma_0}(x)|^{-1}\delta_{[K,Hx]}\]
  on $\SC{H}$ is unimodular and does not depend on the choices of $\Gamma_0$ and $\mathcal{F}_0$.
\end{lemma}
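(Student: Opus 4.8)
The plan is to verify three things in turn: that $\mu_K^H$ is a well-defined probability measure, that it is independent of $\mathcal{F}_0$ and of $\Gamma_0$, and that it is unimodular. The single observation that drives everything is a consequence of the centralizer hypothesis $\Gamma_0\subseteq\bigcap_{h\in H}C_\Gamma(h)$: for $\gamma\in\Gamma_0$ the left translation $m_\gamma\colon w\mapsto\gamma w$ is a simplicial automorphism of $K$ which commutes with the $H$-action, since $\gamma h=h\gamma$ in $\Gamma$ for every $h\in H$. Thus $m_\gamma$ is an isomorphism of rooted, and of doubly rooted, simplicial $H$-complexes, so that
\[ [K,H\gamma x]\gcong[K,Hx]\quad\text{and}\quad[K,H\gamma x,H\gamma z]\gcong[K,Hx,Hz] \]
for all vertices $x,z$ and all $\gamma\in\Gamma_0$. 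Finiteness of the index of $\Gamma_0$ together with cocompactness makes $\mathcal{F}_0$ finite and $w(\Gamma_0)$ a positive real number, and every single orbit $Hx$ is an admissible root ($K$ being connected in the tower setup); the total mass is $w(\Gamma_0)/w(\Gamma_0)=1$, so $\mu_K^H$ is a probability measure.

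Independence of the choices I would treat next. If $\mathcal{F}_0'$ is another fundamental domain for $\Gamma_0$, a bijection $\mathcal{F}_0\to\mathcal{F}_0'$ sends each $x$ to some $\gamma_x x$; conjugation gives $|\St_{\Gamma_0}(\gamma_x x)|=|\St_{\Gamma_0}(x)|$, and the displayed isomorphism gives $\delta_{[K,H\gamma_x x]}=\delta_{[K,Hx]}$, so the weighted sum is unchanged. For independence of $\Gamma_0$, note first that $C:=\bigcap_{h\in H}C_\Gamma(h)$ is a finite intersection of finite index subgroups (each $h\in H$ is FC), hence of finite index, so any two admissible choices $\Gamma_0,\Gamma_1$ share the admissible finite index subgroup $\Gamma_0\cap\Gamma_1$; it therefore suffices to compare $\Gamma_0$ with a finite index subgroup $\Gamma_2\leq\Gamma_0$. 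I would argue orbit by orbit: fix $x\in\mathcal{F}_0$, set $S=\St_{\Gamma_0}(x)$, and decompose the $\Gamma_0$-orbit of $x$ into $\Gamma_2$-orbits, which correspond to double cosets $\Gamma_2 gS$ with representatives $gx$ and $\St_{\Gamma_2}(gx)=\Gamma_2\cap gSg^{-1}$. Letting the finite group $S$ act on the finite set $\Gamma_2\backslash\Gamma_0$ by right multiplication and applying orbit--stabilizer yields
\[ \sum_{\Gamma_2 gS\in\Gamma_2\backslash\Gamma_0/S}\frac{1}{|\St_{\Gamma_2}(gx)|}=\frac{[\Gamma_0:\Gamma_2]}{|S|}, \]
and summing over $x\in\mathcal{F}_0$ gives $w(\Gamma_2)=[\Gamma_0:\Gamma_2]\,w(\Gamma_0)$. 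Since every $gx$ gives the same root $[K,Hx]$ by the centralizer observation, the contribution of this orbit to the $\Gamma_2$-measure is $\tfrac{1}{w(\Gamma_2)}\cdot\tfrac{[\Gamma_0:\Gamma_2]}{|S|}\,\delta_{[K,Hx]}=\tfrac{1}{w(\Gamma_0)|S|}\,\delta_{[K,Hx]}$, matching the $\Gamma_0$-contribution.

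For unimodularity I would reduce to a mass-transport identity for the diagonal $\Gamma_0$-action on $K^{(0)}\times K^{(0)}$. Given measurable $f\colon\SCC{H}\to\bbR_{\ge0}$, set $F(x,z)=f([K,Hx,Hz])$; the centralizer observation gives $F(\gamma x,\gamma z)=F(x,z)$ for $\gamma\in\Gamma_0$. Unfolding the inner sum along the orbit decomposition $K^{(0)}=\bigsqcup_{z'\in\mathcal{F}_0}\Gamma_0 z'$, with $|\St_{\Gamma_0}(z')|$-to-one counting, gives
\[ \sum_{z\in K^{(0)}}F(x,z)=\sum_{z'\in\mathcal{F}_0}\frac{1}{|\St_{\Gamma_0}(z')|}\sum_{\gamma\in\Gamma_0}F(x,\gamma z'), \]
and the invariance $F(x,\gamma z')=F(\gamma^{-1}x,z')$ rewrites the left-hand side of the unimodularity equation as
\[ \sum_{x\in\mathcal{F}_0}\frac{1}{|\St_{\Gamma_0}(x)|}\sum_{z\in K^{(0)}}F(x,z)=\sum_{x,z'\in\mathcal{F}_0}\frac{1}{|\St_{\Gamma_0}(x)|\,|\St_{\Gamma_0}(z')|}\sum_{\delta\in\Gamma_0}F(\delta x,z'). \]
The identical unfolding of $\sum_{z}F(z,x)$ in its first variable yields the same double sum with the two $\mathcal{F}_0$-summation variables interchanged, hence the same value; dividing by $w(\Gamma_0)$ gives unimodularity, and all rearrangements are legitimate because every term is nonnegative (Tonelli). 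I expect the main obstacle to be purely bookkeeping: keeping the stabilizer weights and the $|\St|$-to-one multiplicities consistent through the unfolding in the mass-transport step and through the double-coset count in the $\Gamma_0$-independence step. The conceptual content sits entirely in the centralizer hypothesis, which is exactly what makes translation by $\Gamma_0$ act by $H$-isomorphisms.
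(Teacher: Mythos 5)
Your proposal is correct and follows essentially the same route as the paper: the centralizer hypothesis makes translation by $\gamma\in\Gamma_0$ an isomorphism of (doubly) rooted simplicial $H$-complexes, and your unimodularity computation unfolds the inner sum over a fundamental domain with the stabilizer weights exactly as the paper does. The only divergence is in the independence of $\Gamma_0$: the paper reduces to a \emph{normal} finite-index subgroup $\Gamma_1 \normal \Gamma_0$ so that $|\St_{\Gamma_1}(\gamma y)|=|\St_{\Gamma_1}(y)|$ along $\Gamma_0$-orbits, whereas you handle an arbitrary finite-index subgroup directly via a double-coset/orbit--stabilizer count giving $\sum 1/|\St_{\Gamma_2}(gx)|=[\Gamma_0:\Gamma_2]/|S|$ --- both are valid, and yours avoids passing to the normal core.
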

\begin{proof}
  Observe that any element
  $\gamma \in \Gamma_0$ commutes with all $h \in H$ and thus defines an isomorphism between
  $(K,Hx)$ and $(K,H\gamma x)$ as simplicial $H$-complexes. This shows that the measure is independent of the fundamental domain.

  In order to verify that $\mu_K^H$ does not depend on $\Gamma_0$, it is sufficient to show that we can replace $\Gamma_0$ by some finite index normal subgroup $\Gamma_1 \normal_{f.i.} \Gamma_0$.
  Let $\mathcal{F}_1$ be a fundamental domain for $\Gamma_1$.
  We obtain
  \begin{align*}
    &\sum_{x \in \mathcal{F}_1} |\St_{\Gamma_1}(x)|^{-1} \delta_{[K,Hx]}
    = \sum_{y \in \mathcal{F}_0} |\St_{\Gamma_0}(y)|^{-1}\sum_{\substack{\gamma \in \Gamma_0 \\ \gamma y \in \mathcal{F}_1}}  |\St_{\Gamma_1}(\gamma y)|^{-1} \delta_{[K,H\gamma y]}\\
   = &\sum_{y \in \mathcal{F}_0}|\St_{\Gamma_0}(y)|^{-1} \delta_{[K,H y]}\sum_{\substack{\gamma \in \Gamma_0 \\ \gamma y \in \mathcal{F}_1}} |\St_{\Gamma_1}(y)|^{-1}
    = |\Gamma_0:\Gamma_1| \sum_{y \in \mathcal{F}_0}|\St_{\Gamma_0}(y)|^{-1} \delta_{[K,H y]}
  \end{align*}
  and, from a similar calculation, also $w(\Gamma_1) = |\Gamma_0:\Gamma_1|  w(\Gamma_0)$.

  It remains to show that $\mu_K^H$ is unimodular.
  Let $f \colon \SCC{H} \to \bbR_{\geq 0}$ be a measurable function.
  Unimodularity follows from a short calculation.
  \begin{align*}
    \int_{\SC{H}} \sum_{ y \in L^{(0)}}& f(L,o,Hy) d\mu_K^H([L,o])\\
    &= \frac{1}{w(\Gamma_0)} \sum_{x \in \mathcal{F}_0} |\St_{\Gamma_0}(x)|^{-1} \sum_{y \in K^{(0)}} f(K,Hx,Hy)\\
    &= \frac{1}{w(\Gamma_0)} \sum_{x \in \mathcal{F}_0} |\St_{\Gamma_0}(x)|^{-1} \sum_{y \in \mathcal{F}_0}\sum_{\gamma \in \Gamma_0} |\St_{\Gamma_0}(y)|^{-1} f(K,Hx,H\gamma y)\\
                  &= \frac{1}{w(\Gamma_0)} \sum_{x,y \in \mathcal{F}_0} |\St_{\Gamma_0}(x)|^{-1}|\St_{\Gamma_0}(y)|^{-1}\sum_{\gamma \in \Gamma_0}  f(K,H\gamma^{-1}x,H y) \\
   &= \dots  =  \int_{\SC{H}} \sum_{ x \in L^{(0)}} f(L,Hx,o) d\mu_K^H([L,o]) \qedhere
  \end{align*}
\end{proof}
\begin{proposition}\label{prop:towers-induced}
  Let $G \leq \Gamma$ be a finite subgroup and let $H \leq G$ be the subgroup of FC-elements for $\Gamma$.
  Let $(N_n)_n$ be a descending chain of finite index normal subgroups in $\Gamma$ with
  $\bigcap_{n \in \bbN} N_n = \{1\}$.
  The sequence of simplicial $G$-complexes $(K/N_n)_n$ converges to the random rooted simplicial
  complex $\mu_K^G := \ind_H^G(\mu_K^H)$.
\end{proposition}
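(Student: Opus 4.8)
The plan is to deduce the statement from Proposition~\ref{prop:criterion-convergence-induced}, applied to the subgroup $H \le G$. That criterion takes as input the weak convergence of the associated $H$-complexes and, granting it, reduces the convergence of the $G$-complexes to a displacement condition. Concretely, I would first establish that $\mu^H_{K/N_n}$ converges weakly to $\mu_K^H$ (call this $(\star)$; note that $\mu_K^H$ is a genuine random rooted $H$-complex by the preceding lemma), and then verify that for every $g \in G \setminus H$ and every $C>0$ one has $|E(K/N_n,g,C)|/|(K/N_n)^{(0)}| \to 0$ (call this $(\dagger)$). Proposition~\ref{prop:criterion-convergence-induced} then yields $\mu^G_{K/N_n} \to \ind_H^G(\mu_K^H) = \mu_K^G$, as desired.

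Both parts rest on two standard consequences of residual finiteness together with properness and cocompactness. First, $N_n$ acts freely for large $n$; more quantitatively, for each constant $C$ the finite set $\{\delta \in \Gamma : d_K(\mathcal{F},\delta\mathcal{F}) \le C\}$ eventually contains no nontrivial element of $N_n$, so the covering $K \to K/N_n$ is isometric on balls of radius $C$ once $n$ is large. In particular $d_{K/N_n}(N_np, N_nq) = \min_{\gamma \in N_n} d_K(p,\gamma q)$, and any fixed finite set of group elements becomes separated modulo $N_n$. Since $N_n$ is normal, separation of a set is equivalent to separation of its conjugates, which is what will give the uniformity in the basepoint needed below.

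For $(\dagger)$ I would argue as follows. Writing a lift $p = \eta x$ with $x$ in a fundamental domain $\mathcal{F}$ for $\Gamma$, the condition $d_{K/N_n}(N_np, gN_np) \le C$ becomes $\eta\delta\eta^{-1} \equiv g \pmod{N_n}$ for some $\delta$ in the finite set $S(x,C) = \{\delta \in \Gamma : d_K(x,\delta x) \le C\}$. Hence a vertex is counted only if the image $\bar g$ of $g$ in $\Gamma/N_n$ is conjugate to some $\bar\delta$, and the admissible $\eta \bmod N_n$ then form a coset of $C_{\Gamma/N_n}(\bar g)$. Summing over the finitely many $x \in \mathcal{F}$ and $\delta \in S(x,C)$, the proportion of counted vertices is bounded by a constant times $1/|\text{conjugacy class of }\bar g|$. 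Finally, because $g \in G \setminus H$ is not FC in $\Gamma$, its centralizer has infinite index and $g$ has infinitely many distinct $\Gamma$-conjugates; choosing $m$ of them and using $\bigcap_n N_n = \{1\}$, these stay pairwise distinct in $\Gamma/N_n$ for $n$ large, so the conjugacy class of $\bar g$ has at least $m$ elements. Its size therefore tends to infinity and $(\dagger)$ follows.

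The main obstacle is $(\star)$, which I would test on the cylinder sets $U(\alpha,r)$. For $n$ large the isometric lifting of $r$-balls shows that, for a vertex $v$ and a suitable lift $p$, the $H$-rooted ball $B_r(K/N_n, Hv)$ is isomorphic, as an $H$-complex, to $B_r(K, Hp)$: the set of $h \in H$ with $d_{K/N_n}(v,hv) \le 2r$ coincides with $\{h : d_K(p,hp) \le 2r\}$ by the separation property, and on each embedded piece the geometry and the $H$-action agree with those in $K$. The delicate point is that the isomorphism type of $[K,Hp]$ is \emph{finer} than the $\Gamma$-orbit of $p$: two lifts $p$ and $\gamma p$ give $H$-equivariantly isomorphic rooted complexes precisely when $\gamma$ commutes with $H$, which is guaranteed exactly for $\gamma \in \Gamma_0$. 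Consequently the vertex count organizes itself along $\Gamma_0$-orbits, and matching the resulting proportions---together with the stabilizer weights $|\St_{\Gamma_0}(x)|^{-1}$---against the non-equivariant tower convergence of \cite{schroedl} reproduces exactly the measure $\mu_K^H$. Carrying out this bookkeeping uniformly in the basepoint, and controlling the vanishing proportion of vertices whose nearby $H$-translates are not yet separated, is where the real work lies; the normality of $N_n$ and the independence of $\mu_K^H$ from the choice of $\Gamma_0$ established in the preceding lemma are precisely what make the uniform thresholds and the final identification possible.
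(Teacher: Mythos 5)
Your overall strategy coincides with the paper's: both reduce the statement to Proposition~\ref{prop:criterion-convergence-induced} applied to the subgroup $H$, splitting the work into the $H$-convergence $(\star)$ and the displacement condition $(\dagger)$. Your argument for $(\dagger)$ is essentially the paper's Claim~3: lift the condition to $\Gamma$, observe that for each basepoint and each element of a fixed finite displacement set the admissible lifts form a coset of $C_{\Gamma/N_n}(\bar g)$, and use that a non-FC element acquires arbitrarily large conjugacy classes in $\Gamma/N_n$ because finitely many distinct conjugates remain distinct modulo $N_n$ for large $n$. This part is complete (up to the harmless appearance of $g^{-1}$ in place of $g$, and up to noting that the quotient path distance is the minimum over the $N_n$-orbit, which holds once the action is free).

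The gap is in $(\star)$. You correctly isolate the two relevant points --- for large $n$ the balls $B_r(K/N_n,Hv)$ lift $H$-equivariantly to balls $B_r(K,Hp)$ (the paper's Claim~1), and the $H$-isomorphism type of $[K,Hp]$ is constant along $\Gamma_0$-orbits --- but you then declare the resulting bookkeeping to be ``where the real work lies'' without carrying it out, and the route you indicate (matching against the non-equivariant tower limit of \cite{schroedl}) does not obviously recover the weights $|\St_{\Gamma_0}(x)|^{-1}/w(\Gamma_0)$, because the $H$-isomorphism type of a rooted ball is a strictly finer invariant than what the non-equivariant statement controls. The paper closes this step with a specific device you are missing: since $\mu_K^H$ is independent of the choice of $\Gamma_0$ (the preceding lemma), one may choose $\Gamma_0\le N_n$ with both acting freely for $n$ large; then every vertex of $K/N_n$ is covered by exactly $|N_n:\Gamma_0|$ points of a fundamental domain $\mathcal{F}_0$ for $\Gamma_0$, and the identity $\mu_{K/N_n}^H(U(\alpha,r))=\mu_K^H(U(\alpha,r))$ holds \emph{exactly} for all sufficiently large $n$, so no uniformity or limiting argument is needed. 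A minor further point: your assertion that $p$ and $\gamma p$ yield $H$-isomorphic rooted complexes ``precisely when'' $\gamma$ commutes with $H$ overstates matters; commuting is sufficient, and sufficiency is all the argument requires.
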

\begin{proof}
  The proof consists of two steps. First we show that $(K/N_n)_n$ converges as a sequence of $H$-complexes to $\mu_K^H$ (Claims 1 and 2) and in the second step (Claim 3)  we apply
  Proposition~\ref{prop:criterion-convergence-induced}.

  \medskip
  
  \noindent\emph{Claim 1:} Let $r>0$ be fixed. For all sufficiently large $n$ and all $x \in K^{(0)}$, the $r$-ball $B_r(K,Hx)$ in $K$ and the $r$-ball $B_r(K/N_n,HN_nx)$ in $K/N_n$ are isomorphic as $H$-complexes.

  \smallskip

  Let $\Gamma_0 \leq \Gamma$ be as above and let $\mathcal{F}_0$ be a fundamental domain for
  $\Gamma_0$ acting on $K^{(0)}$.  The action is proper, the sets $\mathcal{F}_0$ and $H$ are
  finite and the vertex degree of $K$ is bounded, hence the set $S$ of elements
  $\gamma \in \Gamma$ such that
  \begin{equation}\label{eq:nonempty-intersection}
    B_{r+1}(K,Hx_0) \cap \gamma B_{r+1}(K,Hx_0) \neq \emptyset
  \end{equation}
  for some $x_0 \in \mathcal{F}_0$ is finite.
  
  Take $n \in \bbN$ so large that $S \cap N_n = \{1\}$.
  Then for all $x_0 \in K^{(0)}$ and $\gamma \in N_n$ property \eqref{eq:nonempty-intersection} implies that $\gamma = 1$.
  Indeed,
  find $\gamma_0 \in \Gamma_0$ with $\gamma_0 x_0 \in \mathcal{F}_0$ then multiplication with $\gamma_0$ yields
  $B_{r+1}(K,H\gamma_0x_0) \cap \gamma_0\gamma\gamma_0^{-1} B_{r+1}(K,H\gamma_0x_0) \neq \emptyset$.
  We deduce  $\gamma_0 \gamma \gamma_0^{-1} \in S \cap N_n =\{1\}$.
   In particular, the quotient map takes the vertices of the $r$-ball $B_r(K,Hx)$
  injectively to the $r$-ball $B_r(K/N_n,HN_nx)$.  We have to verify that every simplex in
  $B_r(K/N_n,HN_nx)$ lifts to a unique simplex in $B_r(K,Hx)$.  Let $\sigma$ be a simplex
  in $B_r(K/N_n,HN_nx)$ and let $\tilde{\sigma}$ be a lift in $K$ such that at least one
  vertex lies in $B_r(K,Hx)$. As a consequence $\tilde{\sigma}$ is a simplex in
  $B_{r+1}(K,Hx)$.  Let $y$ be any vertex of $\tilde{\sigma}$. There is an element $k \in N_n$ such that
  $d(ky,Hx) \leq r$. This means that $B_{r+1}(K,Hx) \cap kB_{r+1}(K,Hx) \neq \emptyset$ and
  shows that $k=1$. In particular, the simplex $\tilde{\sigma}$ lives in $B_r(K,Hx)$.

  \medskip

  \noindent\emph{Claim 2:} The sequence $\mu_{K/N_n}^H$ converges to $\mu_K^H$.
  
  \smallskip
  
  Let $r >0$ and let $\alpha$ be a rooted simplicial $H$-complex of radius at most $r$.
  Let $n \in \bbN$ sufficiently large such that $N_n$ acts freely on $K$ and so that Claim 1 applies.
  In addition, we may take $\Gamma_0 \le N_n$; the action of $\Gamma_0$ is also free.
  Now every point in $K/N_n$ is covered by exactly $|N_n : \Gamma_0|$ points in $\mathcal{F}_0$
  and we deduce
  \begin{align*}
    \mu_{K}^H(U(\alpha,r))
    &= \frac{|\{ x \in \mathcal{F}_0 \mid B_r(K,Hx) \stackrel{H}{\cong} \alpha\}|}{|\mathcal{F}_0|}\\
    &= \frac{|\{ x \in K^{(0)}/N_n \mid B_r(K,HN_nx) \stackrel{H}{\cong} \alpha\}|}{|K^{(0)}/N_n|} = \mu_{K/N_n}^H(U(\alpha,r))
  \end{align*}

  \medskip

  \noindent\emph{Claim 3:} \eqref{eq:points-moved} holds
  for all $C>0$ and all $g \in G \setminus H$.

  \smallskip

  Let $i \in \bbN$ be chosen so that $N_i$ acts freely on $K$ and
  let $\mathcal{F}$ be a fundamental domain for the action of $N_i$ on $K^{(0)}$.
  Let $Z \subseteq \Gamma$ be the finite set of elements $\gamma \in \Gamma$ such that
  $d(\gamma x, x) \leq C$ for some $x \in \mathcal{F}$.
  For $n \geq i$ the vertices of $K/N_n$ correspond bijectively to $N_i/N_n \times \mathcal{F}$.

  Take $x \in K^{(0)}$ and write $\bar{x} = N_nx \in K^{(0)}/N_n$. Suppose that
  $\bar{x} \in E(K/N_n,g,C)$; i.e., there is $\gamma_n \in N_n$ with
  $d(gx,\gamma_n x) \leq C$.  There is a unique $x_0 \in \mathcal{F}$ and an element
  $\gamma_i \in N_i$ satisfying $x = \gamma_i x_0$. This shows that
  $d(\gamma_i^{-1} \gamma_n^{-1}g\gamma_ix_0,x_0) \leq C$ and so
  $\gamma_i^{-1}g\gamma_i \in ZN_n$.  How many elements has the finite set
  $e_n(g,Z) = \{k \in N_i/N_n \mid k^{-1}gk \in ZN_n/N_n\}$?
  Clearly, its cardinality is bounded above by $|Z| \cdot |C_{N_i/N_n}(gN_n)|$.
  The element $g \in G$ has an infinite conjugacy class in $\Gamma$ and
  thus
  \[ \lim_{n\to \infty} \frac{|C_{\Gamma/N_n}(gN_n)|}{|\Gamma:N_n|} = 0;\]
  see the proof of \cite[Lemma 4.12]{Kionke18}. We deduce that
  \[ \lim_{n\to\infty} \frac{|E(K/N_n,g,C)|}{|K^{(0)}/N_n|} \leq  \lim_{n\to\infty} \frac{|e_n(g,Z)|}{|K^{(0)}/N_n|} \leq  \lim_{n\to\infty} \frac{|Z|\cdot |C_{\Gamma/N_n}(gN_n)|}{|N_i : N_n| |K^{(0)}/N_i|}  = 0.\]
  This proves the last claim and Proposition~\ref{prop:criterion-convergence-induced}
  completes the proof.
\end{proof}

\section{Homology and $\ell^2$-multiplicities of random rooted complexes}\label{sec:homology-and-mult}
\subsection{The homology of a random rooted complex}
In order to define $\ell^2$-multiplicities we introduce the $\ell^2$-homology of random rooted simplicial $G$-complexes. To this end, we will construct a chain complex for each probability measure on $\SC{G}$.
We begin with a technical ingredient which allows us to pick a representative for each isomorphism class $[K,o]\in \SC{G}$ of rooted simplicial $G$-complexes in a measurable way.

Let 
\[\bbN_G=\bigsqcup_{X\in \Orb(G)} \bbN_0\times X\]
and let $\Delta^D(\bbN_G)$ be the simplicial $G$-complex consisting of all finite nonempty subsets of
$\bbN_G$ with at most $D+1$ elements. The action of $G$ is defined via the second coordinate.
Every subcomplex $S$ of $\Delta^D(\bbN_G)$ can be encoded by an element $f_S \in \{0,1\}^{\Delta^D(\bbN_G)}$
such that $f_S(\sigma) = 1$ exactly if the simplex $\sigma$ is contained in the subcomplex $S$.
We endow  $\{0,1\}^{\Delta^D(\bbN_G)}$ with the product topology, i.e., the topology generated by all cylinder sets.
Let $\mathrm{Sub}(\Delta^D(\bbN_G)) \subseteq \{0,1\}^{\Delta^D(\bbN_G)}$  be the subset which consists of elements encoding $G$-invariant subcomplexes which contain a unique orbit of the form $\{0\}\times X$; this is a closed subspace.

\begin{lemma}
 There is a continuous map $\Psi \colon \SC{G} \to \mathrm{Sub}(\Delta^D(\bbN_G))$ such that $(\Psi([K,o]),\{0\}\times X)$ is a representative of $[K,o]$ for all $[K,o] \in \SC{G}$. 
\end{lemma}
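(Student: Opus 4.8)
The goal is to construct a continuous section $\Psi$ of the quotient map that assigns to each isomorphism class $[K,o]$ a canonically embedded representative inside the universal complex $\Delta^D(\bbN_G)$. The plan is to build $\Psi$ as a limit of finite-stage approximations indexed by the radius $r$, exploiting the fact that $\SC{G}$ is totally disconnected and that its topology is generated by the clopen sets $U(\alpha,r)$. The key observation is that a rooted simplicial $G$-complex of vertex degree at most $D$ is determined by the increasing sequence of its balls $B_r(K,o)$, and each such ball is a \emph{finite} simplicial $G$-complex, of which there are only countably many isomorphism types for each fixed radius.

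First I would set up a combinatorial enumeration. Since the vertex degree is bounded by $D$, the ball $B_r(K,o)$ has uniformly bounded size (bounded in terms of $D$, $r$, and $|G|$), so for each $r$ the set of isomorphism classes of balls of radius $r$ is finite; call it $\mathcal{B}_r$. The inclusions $B_r \hookrightarrow B_{r+1}$ induce restriction maps $\mathcal{B}_{r+1} \to \mathcal{B}_r$, and $\SC{G}$ is the inverse limit of the $\mathcal{B}_r$ as a profinite set. The strategy is to choose, compatibly and $G$-equivariantly, an embedding of each ball into $\Delta^D(\bbN_G)$ so that the root orbit maps to $\{0\}\times X$ and so that the chosen embedding of $B_{r+1}(K,o)$ \emph{extends} that of $B_r(K,o)$. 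Concretely I would fix, for each isomorphism type in $\mathcal{B}_r$, a labelling of its vertices by elements of $\bbN_G$ (starting with the root orbit labelled $\{0\}\times X$) and then, inductively on $r$, enumerate the new vertices appearing at radius $r+1$ using the as-yet-unused natural numbers in the appropriate $G$-orbit components, keeping all choices $G$-equivariant by only choosing one label per orbit and propagating via the $G$-action.

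The main obstacle is making these inductive choices in a way that is simultaneously (i) well-defined on isomorphism classes, (ii) $G$-equivariant, and (iii) \emph{coherent} across radii, so that the resulting map is a genuine continuous function and not merely a Borel selection. The difficulty is that an abstract isomorphism $B_r(K,o) \cong B_r(L,o')$ need not be unique—there may be nontrivial automorphisms—so a naive vertex labelling depends on an arbitrary choice of isomorphism. I would resolve this by fixing once and for all a well-ordering (e.g. the lexicographic order inherited from $\bbN_G$) and defining the canonical labelling of each ball to be the \emph{lexicographically smallest} among all admissible equivariant labellings extending the labelling already fixed at the previous radius. This removes the choice, guarantees coherence by construction, and respects the $G$-action because the ordering on $\bbN_G$ is chosen to be $G$-invariant on orbit components.

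Finally I would verify continuity. The map $\Psi$ lands in $\mathrm{Sub}(\Delta^D(\bbN_G))$ by construction, since each $\Psi([K,o])$ is a $G$-invariant subcomplex containing the single orbit $\{0\}\times X$. Continuity is checked on cylinder sets: the value $\Psi([K,o])(\sigma)$ for a simplex $\sigma \subseteq \Delta^D(\bbN_G)$ of vertices at labelled distance at most $r$ from the root depends only on $B_r(K,o)$ up to isomorphism, i.e.\ only on which set $U(\alpha,r)$ the point $[K,o]$ lies in; since those sets are clopen, the preimage of any cylinder set is open. That $(\Psi([K,o]),\{0\}\times X)$ represents $[K,o]$ is then immediate, because at each radius the chosen labelling is by construction an equivariant isomorphism onto its image.
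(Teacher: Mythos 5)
Your construction is essentially the paper's: the paper also fixes a diagonal enumeration of $\bbN_G$ and of the simplices of $\Delta^D(\bbN_G)$, defines $\Psi([K,o])$ as the lexicographically minimal subcomplex representing $[K,o]$ with root orbit $\{0\}\times X$, obtains coherence across radii via $B_r(\Psi([B_{r+1}(K,o)]))=\Psi([B_r(K,o)])$, and deduces continuity from the fact that membership of a given simplex is decided on the clopen sets $U(\alpha,r)$. The only cosmetic difference is that you impose the radius-by-radius compatibility by construction (lex-minimal \emph{extension} of the previous stage's labelling), whereas the paper takes a global lex-minimum and observes the compatibility as a consequence; both are correct implementations of the same canonical-representative idea.
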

\begin{proof}
  We only sketch the proof; a detailed treatment of the nonequivariant case can be found in \cite[Lemma 1]{schroedl}.
  
   We enumerate $\bbN_G$, the set of
vertices of $\Delta^D(\bbN_G)$, in the following way.  First we enumerate the (isomorphism
classes of) $G$-sets $X_1,\dots, X_k \in \Orb(G)$ and for every $i$ we order the
elements of $X_i = \{x_{i,1},...,x_{i,m_i}\}$. Finally, we enumerate $\bbN_G$ diagonally:
\[(0,x_{1,1}),...,(0,x_{1,m_1}),(0,x_{2,1}),....,(0,x_{k,m_k}),(1,x_{1,1}),...\;.\]
Once the set of vertices is ordered, a diagonal enumeration provides an order on the set of all simplices of $\Delta^D(\bbN_G)$.
The lexicographic order on $\{0,1\}^{\Delta^D(\bbN_G)}$, given by 
 $a\prec b$ if there is a simplex $\sigma_0$ such that $a(\sigma)=b(\sigma)$  for all  $\sigma < \sigma_0$ and  $a(\sigma_0)=1$ but $b(\sigma_0)=0$,
defines an order on the subcomplexes of $\Delta^D(\bbN_G)$. We define $\Psi$ to map an isomorphism class $[K,o]\in\SC{G}$ to the $\prec$-minimal subcomplex $\Lambda$ of $\Delta^D(\bbN_G)$ such that $(\Lambda,\{0\}\times X_i)\in [K,o]$, where $X_i$ is the $G$-set in $\Orb(G)$ with  $X_i\cong o$, and such that the elements of $\{0\}\times X_i$ are the only vertices of $\Lambda$ with first coordinate $0$. For a finite simplicial $G$-complex the existence of the minimal subcomplex of $\Delta^D(\bbN_G)$ follows from the well-ordering principle. In the situation of an infinite simplicial $G$-complex $[K,o]$ it is a direct consequence of the fact that \[B_r(\Psi([B_{r+1}(K,o)]))=\Psi([B_r(K,o)]).\]
The preimage of a cylinder set under $\Psi$ is a countable union of open sets $U(\alpha,r)$ in $\SC{G}$ and therefore open, hence $\Psi$ is continuous. 
\end{proof}

For any simplicial complex $L$, we write $\CC_n(L)$ to denote the complex Hilbert space of square-summable oriented
$n$-chains of $L$.
The map $\Psi$ from the preceding lemma gives rise to a field of Hilbert spaces $[K,o]\to \CC_n(\Psi([K,o]))$ on $\SC{G}$ for each $n\in\bbN$.
In addition, every oriented $n$-simplex $s$ of $\Delta^D(\bbN_G)$ yields a characteristic \emph{vector field} $\xi_s$ defined as
\begin{equation*}
  \xi_s([K,o]) =  \begin{cases}
    s \quad & \text{ if $s$ belongs to $\Psi([K,o])$}\\
    0 & \text{ otherwise.}
                   \end{cases}
\end{equation*}
We observe that, since $\Psi$ is continuous, the function $[K,o]\mapsto \langle\delta_{s}([K,o]),\delta_{s'}([K,o])\rangle$ is continuous for all oriented simplices $s$ and $s'$;
the $\xi_s$ form  a \emph{fundamental sequence} for a \emph{measurable field of Hilbert spaces}; see \cite[Prop.~4]{dixmier}.
A vector field $\sigma\colon[K,o]\mapsto \sigma([K,o])\in\CC_n( \Psi([K,o]))$ is called \emph{measurable}, if
\[[K,o]\mapsto\langle \sigma([K,o]),\xi_s([K,o])\rangle\]
is measurable for every oriented $n$-simplex $s$ of $\Delta^D(\bbN_G)$. Let $\mu$ be a random rooted simplicial $G$-complex. The measurable vector fields $\sigma$ with the property
\[\|\sigma\|^2:=\int_{\SC{G}} \|\sigma([K,o])\|^2 d\mu<\infty\]
form a pre-Hilbert space using
the inner product
\[\langle\sigma,\sigma'\rangle=\int_{\SC{G}}\langle\sigma([K,o]),\sigma'([K,o])\rangle d\mu.\]
By factoring out the subspace of vector fields which vanish almost everywhere, we obtain a Hilbert space: the associated direct integral;
\[\CC_n(\SC{G},\mu) := \int_{\SC{G}}^\oplus \CC_n(\Psi([K,o])) d\mu,\]
for details see \cite[p.~168]{dixmier}.
The differentials $\partial_{n,[K,o]}$ and their adjoints $d_{[K,o]}^n$ of the fibres $\CC_{n}(\Psi([K,o])$ define bounded operators (compare to \cite{schroedl})
\begin{align*}
\partial_n\colon\CC_n(\SC{G},\mu)\to \CC_{n-1}(\SC{G},\mu),\\
d_n\colon \CC_{n-1}(\SC{G},\mu)\to \CC_{n}(\SC{G},\mu),
\end{align*}
which commute with the induced unitary $G$-action on $\CC_n(\SC{G},\mu)$, since they commute fibrewise and $G$ preserves fibres.
Therefore, we have for each random rooted simplicial $G$-complex a chain complex $\CC_*(\SC{G},\mu)$ and a Laplace operator $\Delta_n=\partial_{n+1} \circ d_{n+1} + d_n\circ \partial_n$ which also commutes with the $G$-action. 
\begin{definition}
  We define the \emph{$n$-th simplicial $\ell^2$-homology of a random rooted simplicial complex $\mu$} as the Hilbert space
\[\HH_n(\SC{G},\mu):= \ker \Delta_n\]
equipped with the natural unitary action of $G$.
\end{definition}

We would like to have a notion of dimension for a subspace of $\CC_n(\SC{G},\mu)$, to this end  we introduce a von Neumann algebra with a trace.
A bounded linear operator $T$ on $\CC_n(\SC{G},\mu)$ is \emph{decomposable}, if there is an essentially bounded measurable field of operators $[K,o]\mapsto T_{[K,o]}$ such that $T= \int^{\oplus}T_{[K,o]}d\mu([K,o])$; see \cite[p.~182]{dixmier}.
The bounded decomposable operators $T$ on $\CC_n(\SC{G},\mu)$ such that for almost all $[K,o]$ and for all  isomorphisms $\phi\colon \Psi ([K,o])\to \Psi([K,o'])$ of simplicial $G$-complexes the
identity
\[\langle T_{[K,o]}\sigma([K,o]),\sigma([K,o])\rangle=\langle T_{[K,o']}\phi_\sharp(\sigma([K,o])),\phi_\sharp(\sigma([K,o]))\rangle\]
holds, form a von Neumann algebra $\mathcal{A}_n(\mu)$. In fact, to see that $\mathcal{A}_n(\mu)$ is closed in the strong operator topology one can use \cite[Prop.~4, p.~183]{dixmier}.
Of course, the operators defined by elements of $G$ are contained in $\mathcal{A}_n(\mu)$, since we only consider isomorphisms which commute with the $G$-action. Moreover, $\Delta_n \in \mathcal{A}_n(\mu)$, because $\partial_*$ and $d_*$ commute with the chain map $\phi_\sharp\colon \CC_n(\Psi([K,o]))\to \CC_n(\Psi([K,o']))$ induced by an isomorphism. For $T \in \mathcal{A}_n(\mu)$ we define
\begin{align*}
\tr(T)=\sum_{X\in \Orb(G)}\sum_{x\in X}\sum_{\substack{s\in \Delta^D(\bbN_G)(n)\\ (0,x)\in s}}\frac{\langle T\xi_s,\xi_s\rangle}{|X|(n+1)},
\end{align*}
where $\Delta^D(\bbN_G)(n)$ denotes the set of $n$-simplices of $\Delta^D(\bbN_G)$. Note that the formula does not depend on the chosen orientation of $s$. As in the nonequivariant case one can verify that $\tr(ST)=\tr(TS)$; see \cite{schroedl} after Definition 5. We obtain a normal, faithful and finite trace on $\mathcal{A}_n(\mu)$; compare to \cite[Prop.~3]{schroedl}.
\begin{definition}\label{def:von-Neumann-dim}
 Let $\mathcal{K}$ be a field of $G$-invariant subspaces of $C_n^{(2)}(\SC{G},\mu)$ such that $\phi_\sharp \mathcal{K}([K,o]) = \mathcal{K}([K,o'])$
 for every isomorphism $\phi\colon \Psi([K,o]) \to \Psi([K,o'])$.
 Then the projection $\pr_\mathcal{K}\colon [K,o]\mapsto\pr_{\mathcal{K}([K,o])}$ is an element of
 $\mathcal{A}_n(\mu)$ and we define the \emph{von Neumann dimension} of $\mathcal{K}$ as 
\begin{align*}
\dim_{\mathrm{vN}}(\mathcal{K})=\tr(\pr_\mathcal{K}).
\end{align*}
\end{definition}
\begin{example}\label{ex:total-mass}
  Let $L$ be a finite simplicial $G$-complex and $\mu_L^G$ the associated random rooted simplicial $G$-complex from Example \ref{example-finiterrsc}. Let $\mathcal{K}$ be a field of $G$-invariant subspaces of $C_n^{(2)}(\SC{G},\mu_L^G)$ as in Definition \ref{def:von-Neumann-dim}.
  Given an orbit $o \subseteq L^{(0)}$ and an isomorphism  $\eta\colon L \to \Psi([L,o])$ we define
  $\mathcal{K}(L) = \eta^{-1}(\mathcal{K}([L,o])) \subseteq C_n^{(2)}(L)$; this subspace does not depend on $o$ and $\eta$.
  We compute the dimension:
\begin{align*}
  \dim_{\mathrm{vN}}(\mathcal{K})=\tr(\pr_{\mathcal{K}}) &= \sum_{X\in \Orb(G)}\sum_{x\in X}\sum_{\substack{s\in \Delta^D(\bbN_G)(n)\\ (0,x)\in s}}\int_{\SC{G}}\frac{\langle \pr_{\mathcal{K}}\xi_s,\xi_s\rangle}{|X|(n+1)} d\mu_L^G\\
&=\frac{1}{|L^{(0)}|} \sum_{y \in L^{(0)}}\sum_{x\in Gy}\sum_{\substack{s\in L(n)\\ x \in s}}\frac{\langle \pr_{\mathcal{K}(L)} s, s \rangle}{|Gy|(n+1)}\\
&=\frac{1}{|L^{(0)}|}\sum_{s\in L(n)}\sum_{x\in s}\frac{\langle \pr_{\mathcal{K}(L)} s, s \rangle}{(n+1)}
=\frac{\dim_\bbC \mathcal{K}(L)}{|L^{(0)}|}
\end{align*}
\end{example}

\subsection{Approximation of $\ell^2$-multiplicities}

In order to fix our notation we recall some facts from the representation theory of finite groups.
Let $G$ be a finite group and let $(\rho,V) \in \Irr(G)$ be an irreducible representation of $G$ on the complex vector space $V$. Usually we will not mention the underlying vector space and simply speak of the representation $\rho$.  Every irreducible representation is finite dimensional and is uniquely determined by its character $\chi_\rho\colon G \to \bbC$ which maps $g \in G$ to the trace of $\rho(g)$. In particular, $\chi_\rho(1)$ is the dimension of the underlying space $V$.

Let $(\sigma, W)$ be any finite dimensional complex representation of $G$, then $W$ can be decomposed into isotypic components
\[ W = \bigoplus_{\rho \in \Irr(G)} W_\rho\]
where each $W_\rho$ is (noncanonically) isomorphic to a direct sum of copies of $\rho$, i.e., $W_\rho \cong \rho^{m(\rho,\sigma)}$ where the number $m(\rho,\sigma)$ of copies is called the
\emph{multiplicity} of $\rho$ in~$\sigma$.

The irreducible representations correspond bijectively to the central idempotents in the group ring $\bbC[G]$.
The central idempotent corresponding to $\rho$ is
\[P_\rho=\frac{\chi_\rho(1)}{|G|}\sum_{g\in G}\overline{\chi}_\rho(g)g \in \bbC[G];\] 
see \cite[(2.12)]{Isaacs}. The element $P_\rho$ defines the orthogonal projection onto the $\rho$-isotypic component in every unitary representation of $G$.
\begin{definition}
Let $\mu$ be a random rooted simplicial $G$-complex and $\rho$ an irreducible representation of $G$. 
The \emph{$\ell^2$-multiplicity of $\rho$} in the homology of $\mu$ is 
\[ m^{(2)}_n(\rho,\mu) = \frac{1}{\chi_\rho(1)} \dim_{\mathrm{vN}}\HH_n(\SC{G},\mu)_{\rho} \]
where $\HH(\SC{G},\mu)_{\rho}$ denotes the $\rho$-isotypic component in the homology of $\mu$.

In addition, we define the \emph{$n$-th $\rho$-Laplacian} to be
\[(\id-P_\rho)+\Delta_n=:\Delta_{n,\rho},\]
where $\Delta_n$ is the Laplacian of $\CC_n(\SC{G},\mu)$.
\end{definition}
\begin{remark}
  If $G$ is the trivial group $\{1\}$ and $\rho$ is the unique irreducible representation of $G$, i.e., the
  trivial $1$-dimensional representation, then $m^{(2)}_n(\rho,\mu) = b_n^{(2)}(\mu)$
  is simply the $n$-th \emph{$\ell^2$-Betti number} of a random rooted simplicial complex defined in \cite[Def.~6]{schroedl}.
\end{remark}
\begin{example}\label{ex:finite-complex-multiplicities}
Let $K$ be a finite simplicial $G$-complex and $\mu_K^G$ the associated random rooted simplicial $G$-complex.
Then it follows from Example \ref{ex:total-mass} that $m_n^{(2)}(\rho,\mu_K^G)$ is the ordinary multiplicity of the representation $\rho$ in $H_n(K,\bbC)$ divided by the number of vertices, i.e.,
\[ m_n^{(2)}(\rho,\mu_K^G) = \frac{m(\rho, H_n(K,\bbC))}{|K^{(0)}|}.\]

\end{example}
\begin{lemma}\label{lem:laplace-bounded-selfadjoint}
The operator $\Delta_{n,\rho}$ is positive self-adjoint and the operator norm $\|\Delta_{n,\rho}\|$ is bounded above by a constant $b(n,D)$ which depends only on $n$ and $D$. 
Moreover, the kernel of $\Delta_{n,\rho}$ is $\HH_n(\SC{G},\mu)_\rho$.
\end{lemma}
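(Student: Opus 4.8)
The plan is to treat $\Delta_{n,\rho} = (\id - P_\rho) + \Delta_n$ as a sum of two \emph{commuting} positive self-adjoint operators and to read off all three assertions from this structure. First I would record that $\Delta_n$ itself is positive and self-adjoint: since $d_n$ is the fiberwise adjoint of $\partial_n$, the fiberwise Laplacian is $\Delta_{n,[K,o]} = \partial_{n+1}\partial_{n+1}^* + \partial_n^*\partial_n$, which is manifestly positive self-adjoint on each $\CC_n(\Psi([K,o]))$, and forming the direct integral over $\SC{G}$ preserves both properties. The substantive point here is the uniform norm bound. Because every fiber has vertex degree at most $D$, each $n$-simplex is a face of at most $D$ simplices of dimension $n+1$: an $(n+1)$-coface is obtained by adjoining a single vertex adjacent to a fixed vertex of the given simplex, and there are at most $D$ such vertices. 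Combined with the fact that each $(n+1)$-simplex has $n+2$ faces of dimension $n$, a Schur-test estimate yields $\|\partial_{n+1,[K,o]}\| \leq \sqrt{(n+2)D}$ uniformly in $[K,o]$, and hence $\|\Delta_n\| \leq (2n+3)D =: b'(n,D)$, a constant depending only on $n$ and $D$.

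Next I would observe that $\id - P_\rho$ is an orthogonal projection. The character identity $\chi_\rho(g^{-1}) = \overline{\chi_\rho(g)}$ gives $P_\rho^* = P_\rho$ in any unitary representation, and $P_\rho$ is idempotent in $\bbC[G]$; thus $\id - P_\rho$ is a self-adjoint idempotent, so it is positive self-adjoint with $\|\id - P_\rho\| \leq 1$. Since $\Delta_n$ commutes with the unitary $G$-action on $\CC_n(\SC{G},\mu)$ (as established when the chain complex was constructed, the differentials commute with $G$ fiberwise), it commutes with every group element and therefore with $P_\rho \in \bbC[G]$. Consequently $\Delta_{n,\rho}$ is a sum of two commuting positive self-adjoint operators, hence positive self-adjoint, with $\|\Delta_{n,\rho}\| \leq 1 + b'(n,D) =: b(n,D)$.

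For the kernel I would use the elementary fact that, for commuting positive self-adjoint operators $A$ and $B$, one has $\ker(A+B) = \ker A \cap \ker B$; indeed $\langle (A+B)v, v\rangle = \|A^{1/2}v\|^2 + \|B^{1/2}v\|^2$ is a sum of nonnegative terms, so it vanishes exactly when both do. Applying this with $A = \id - P_\rho$ and $B = \Delta_n$ gives $\ker \Delta_{n,\rho} = \im P_\rho \cap \ker \Delta_n$. Because $P_\rho$ commutes with $\Delta_n$ it preserves $\ker \Delta_n = \HH_n(\SC{G},\mu)$, so this intersection equals $P_\rho\bigl(\HH_n(\SC{G},\mu)\bigr)$, which is by definition the $\rho$-isotypic component $\HH_n(\SC{G},\mu)_\rho$.

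The only genuinely technical step I anticipate is the uniform operator-norm bound $\|\Delta_n\| \leq b(n,D)$, i.e.\ extracting the combinatorial coface count from the vertex-degree constraint and turning it into an estimate valid simultaneously on all fibers; once that is in hand, the positivity, self-adjointness, and kernel computation are formal consequences of the commutation $[\Delta_n, P_\rho] = 0$ and the positivity of the two summands.
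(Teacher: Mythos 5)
Your proof is correct and follows essentially the same route as the paper: decompose $\Delta_{n,\rho}$ as the sum of the commuting positive self-adjoint operators $\id-P_\rho$ and $\Delta_n$, bound $\|\Delta_n\|$ via the coface count coming from the vertex-degree bound $D$ (the paper outsources this to \cite[Proposition 2]{schroedl}), and identify the kernel as $\ker\Delta_n\cap\im P_\rho$ by a positivity argument. The only cosmetic difference is that you test $\langle \Delta_{n,\rho}x,x\rangle=0$ while the paper expands $\|\Delta_{n,\rho}x\|^2=0$; both reduce to the same nonnegative-summand decomposition.
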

\begin{proof}
It is easy to see that the operator $\Delta_{n,\rho}$ is positive self-adjoint using that $\Delta_n$ and $\id - P_\rho$ have these properties and commute.
It is well-known (see \cite[Proposition 2]{schroedl}) that the operator norm of the Laplacian $\Delta_n$ is bounded, since the bound on the vertex degree yields a bound for the number of $(n+1)$-simplices which contain a given $n$-simplex. Now, the operator $\id-P_\rho$ is a projection and we find $\| \Delta_{n,\rho} \| \leq \|\Delta_n\| + 1$.

Observe that a vector $x$ lies in $ \ker(\Delta_{n,\rho})$ if and only if $\langle \Delta_{n,\rho} x,\Delta_{n,\rho} x\rangle=0$.
We note further that
\begin{equation*}
\langle \Delta_{n,\rho} x,\Delta_{n,\rho} x\rangle = \|(\id-P_\rho)x\|^2+\|\Delta_n x\|^2 + 2\langle \Delta_n (\id-P_\rho)x, x\rangle
\end{equation*}
since $P_\rho$ and $\Delta_n$ are self-adjoint and commute.
All three summands are nonnegative, since $\Delta_n(\id-P_\rho)$ is a positive operator. We conclude that 
\[\ker(\Delta_{n,\rho}) = \ker \Delta_n\cap \ker(\id-P_\rho) =  \ker\Delta_n \cap \im(P_\rho) =  \HH_n(\SC{G},\mu)_\rho.\]
\end{proof}
Let $E_{\Delta_{n,\rho}}$ be the unique projection valued measure obtained from the spectral calculus for the bounded and self-adjoint operator $\Delta_{n,\rho}$. Then $E_{\Delta_{n,\rho}}$ has the property that for all bounded Borel functions $f$ on $\bbR$
\[f(\Delta_{n,\rho})=\int_{\bbR}f(\lambda) dE_{\Delta_{n,\rho}}(\lambda).\]
Further, we define the \emph{spectral measure of } $\Delta_{n,\rho}$ as
\[\nu_{n,\rho}(B):=\tr ( E_{n,\rho}(B)).\]
for every Borel set $B$. The spectral measure satisfies
$\tr(f(\Delta_{n,\rho}))  = \int_\bbR f(\lambda) d\nu_{n,\rho}$ for all bounded Borel functions $f$ on $\bbR$.

\begin{lemma}\label{lem:weak-convergence}
Let $(\mu^k)_{k\in \bbN}$ be a sequence of random rooted simplicial $G$-complexes which converges weakly to $\mu^\infty$ and let $\nu^k_{n,\rho}$ be the associated spectral measures of the $n$-th $\rho$-Laplacians $\Delta_{n,\rho}$. Then $(\nu^k_{n,\rho})_k$  converges weakly to $\nu^\infty_{n,\rho}$.
\end{lemma}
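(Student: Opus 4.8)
The plan is to reduce weak convergence of the spectral measures $\nu^k_{n,\rho}$ to the already-established weak convergence $\mu^k \to \mu^\infty$ by exploiting the moment problem. Since each $\Delta_{n,\rho}$ has operator norm bounded by the universal constant $b(n,D)$ (Lemma~\ref{lem:laplace-bounded-selfadjoint}), all the spectral measures $\nu^k_{n,\rho}$ are supported in the fixed compact interval $[0, b(n,D)]$. On a compact interval, weak convergence of probability (or finite) measures is equivalent to convergence of all moments $\int_\bbR \lambda^p \, d\nu^k_{n,\rho}(\lambda)$ as $k \to \infty$, for every $p \in \bbN_0$. So it suffices to prove that these moments converge. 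By the defining property of the spectral measure, $\int_\bbR \lambda^p \, d\nu^k_{n,\rho} = \tr\bigl((\Delta_{n,\rho})^p\bigr)$ computed with respect to $\mu^k$, so the task becomes showing that $k \mapsto \tr\bigl((\Delta_{n,\rho})^p\bigr)$ converges.

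First I would expand $(\Delta_{n,\rho})^p = \bigl((\id - P_\rho) + \Delta_n\bigr)^p$ and, using the trace formula together with the explicit formula $P_\rho = \frac{\chi_\rho(1)}{|G|}\sum_{g\in G}\overline{\chi}_\rho(g)\, g$, write the trace as a finite $\bbC$-linear combination of traces of words in the basic operators $\partial_*$, $d_*$ and the group elements $g \in G$. The key point is that each such word $W$ is a \emph{decomposable} operator whose fibre $W_{[K,o]}$ is a local, bounded operator: its matrix coefficient $\langle W_{[K,o]}\, s, s'\rangle$ against a pair of simplices depends only on the isomorphism type of a bounded-radius ball $B_\ell(\Psi([K,o]), \{0\}\times X)$ around the relevant simplices, where $\ell$ depends only on $p$, $n$ and $D$ but not on $k$. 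Unwinding the trace definition, one sees that $\tr(W)$ is an integral against $\mu^k$ of a function $[K,o] \mapsto \Phi_W([K,o])$ which is \emph{continuous} on $\SC{G}$ (indeed it only depends on a ball of fixed finite radius, so it is locally constant and bounded).

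Once $\tr(W)$ is exhibited as $\int_{\SC{G}} \Phi_W \, d\mu^k$ for a fixed bounded continuous function $\Phi_W$ independent of $k$, the weak convergence $\mu^k \to \mu^\infty$ gives immediately
\[ \lim_{k\to\infty} \tr\bigl((\Delta_{n,\rho})^p\bigr)_{\mu^k} = \int_{\SC{G}} \Phi \, d\mu^\infty = \tr\bigl((\Delta_{n,\rho})^p\bigr)_{\mu^\infty}, \]
where $\Phi$ is the corresponding linear combination of the $\Phi_W$. Since this holds for every $p \in \bbN_0$, and all measures live on the same compact interval, convergence of all moments yields weak convergence $\nu^k_{n,\rho} \to \nu^\infty_{n,\rho}$, as claimed.

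The main obstacle is the step asserting that the trace of each word $W$ is given by integrating a bounded \emph{continuous} (locally constant) function against the measure. This requires care on two fronts: first, that the finite propagation speed of $\partial_*$, $d_*$ and the $G$-action really confines the matrix coefficients $\langle W_{[K,o]}\,\xi_s, \xi_{s'}\rangle$ to depend only on a ball of radius bounded uniformly in $k$ (a standard locality argument, but one must track how the group elements $g$ move simplices around, since $G$ can act with large orbits); and second, that after summing the trace formula over all simplices $s$ containing a fixed vertex $(0,x)$, the resulting per-orbit contribution assembles into a genuinely continuous function on $\SC{G}$ and not merely a measurable one. The continuity is what makes weak convergence applicable, so this locality-and-continuity bookkeeping is the technical heart; the moment-problem reduction and the final limit are then routine.
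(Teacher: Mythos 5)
Your proposal is correct and follows essentially the same route as the paper: reduce weak convergence of the compactly supported spectral measures to convergence of the moments $\tr(\Delta_{n,\rho}^r)$ (the paper phrases this via Weierstrass approximation by polynomials), expand the power of the $\rho$-Laplacian, and use finite propagation of $\partial_*$, $d_*$ and $P_\rho$ to see that $\langle \Delta_{n,\rho}^r\xi_s,\xi_s\rangle$ depends only on a ball of radius $2r+1$ around the root orbit, so that the trace is the integral of a locally constant function and weak convergence of $\mu^k$ applies. The locality concern you flag about the $G$-action is handled exactly as you suspect: $G$ is finite and $P_\rho$ permutes simplices within the root orbit's neighbourhood, so everything stays in a ball of radius bounded independently of $k$.
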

\begin{proof}
For the sake of simplicity we denote $\nu^k_{n,\rho}$ and $\nu^\infty_{n,\rho}$ by $\nu^k$ and $\nu^\infty$ respectively.
Since, by Lemma \ref{lem:laplace-bounded-selfadjoint}, $\spec(\Delta_{n,\rho})\subseteq [0,R]$ for some $R>0$, Weierstra{\ss} approximation implies that it is enough to check the identity
\[\lim_{k\to\infty}\int_{0}^{R} f(\lambda)d\nu^k=\int_{0}^{R} f(\lambda)d\nu^\infty\]
for all polynomials $f\in \bbR[x]$. By linearity we can further assume that $f=x^r$.
\begin{equation*}
  \int_{0}^{R} f(\lambda)d\nu^k= \tr(\Delta_{n,\rho}^r)
=\sum_{X\in \Orb(G)}\sum_{x\in X}\sum_{\substack{s\in \Delta^D(\bbN_G)(n)\\ (0,x)\in s}}\int_{\SC{G}}\frac{\langle \Delta_{n,\rho}^r\xi_s,\xi_s\rangle}{|X|(n+1)} d\mu^k
\end{equation*}
Let us consider $\Delta_{n,\rho}^r$ and observe that
\begin{align*}
\Delta_{n,\rho}^r=((\id-P_\rho)+\Delta_n)^r&=\sum_{j=0}^r\binom{r}{j}(\id-P_\rho)^{r-j}\Delta_n^j\\
&= \Delta_n^r + \sum_{j=0}^{r-1}\binom{r}{j}\Delta_n^j(\id-P_\rho).
\end{align*}
Let $s\in \Delta(\bbN_G)(n)$ with $(0,x)\in s$ and $x\in X$ and suppose that $s\in\Psi([K,o])$ for a rooted isomorphism class $[K,o]$. Then $(\id-P_\rho)(s)$ is supported in the $1$-ball of the orbit $\{0\}\times X$, since $P_\rho$ is a linear combination of elements $g\in G$ which only act on the second coordinate. Further, $\Delta_n(s)$ is a linear combination of simplices in the $2$-ball around $(0,x)$. Therefore, $\langle \Delta_{n,\rho}^r \xi_s,\xi_s\rangle$ only depends on the $2r+1$-neighbourhood of the orbit $\{0\}\times X$. By the weak convergence of the sequence $(\mu^k)_k$ we obtain that 
\[\lim_{k\to\infty}\int_{U(\alpha,2r+1)}\langle \Delta_{n,\rho}^r\xi_s,\xi_s\rangle d\mu^k=\int_{U(\alpha,2r+1)}\langle \Delta_{n,\rho}^r\xi_s,\xi_s\rangle d\mu^\infty,\]
for all finite rooted simplicial $G$-complexes $\alpha$. Now the claim follows from the fact that $\SC{G}$ is a finite union of open sets of the form $U(\alpha,2r+1)$.
\end{proof}

  Now we can prove Theorem \ref{thm:approximation}. We follow the well-known strategy, going back to L\"uck \cite{Lueck1994} and Schick \cite{Schick2001}, of bounding the Fuglede-Kadison determinant. Here we have to use nonrational algebraic coefficients and the corresponding method is inspired from \cite[Section 3]{Dodziuk-et-al}; compare also \cite[Lemma 3.14]{Kionke18}.
  
  \begin{proof}[Proof of Theorem~\ref{thm:approximation}]
  Since all $\mu_k$ are sofic, it is sufficient to prove the theorem under the assumption that the sequence $\mu_k = \mu_{K_k}^G$ is actually a sequence of finite simplical $G$-complexes.
  Let  $\nu^k_{n,\rho}$ (resp.~$\nu^\infty_{n,\rho}$) be the spectral measure of the $n$-th $\rho$-Laplacian of $K_k$ (resp.\ of $\mu_\infty$).

  By Lemma \ref{lem:laplace-bounded-selfadjoint} we have to show that
  $\nu_{n,\rho}^k(\{0\}) = \chi_\rho(1) m_n^{(2)}(\rho,\mu_{K_k}^G)$ converges to $\nu_{n,\rho}^\infty(\{0\})$.
   In view of Lemma
  \ref{lem:weak-convergence} it remains to show that the \emph{Fuglede-Kadison determinant}
  \[ \det(\nu_{n,\rho}^k) = \exp \int_{\bbR_{>0}} \log(\lambda) d \nu^k_{n,\rho}\]
  of $\nu^k_{n,\rho}$ is uniformly bounded away from zero; see \cite[Lemma 2.20]{Kionke18}.

  Let $E \subseteq \bbC$ be a finite Galois extension of $\bbQ$ which is a splitting field for the finite group $G$; see \cite[(9.10)]{Isaacs} for the existence. In particular, all irreducible characters of $G$ take values only in the ring of integers $\mathcal{O}_E$ of $E$; see \cite[(3.6)]{Isaacs}.

  Fix $k \in \bbN$. We pick a basis of $C_n^{(2)}(K_k)$ by choosing an orientation for every
  $n$-simplex of $K = K_k$.   The transformation matrix $A_\rho$ of the $\rho$-Laplacian
  $\Delta_{n,\rho}$ on $K_k$ with respect to this basis has entries in
  $\frac{1}{|G|}\mathcal{O}_E$.  The spectral measure $\nu^k_{n,\rho}$ agrees with the spectral
  measure of $A_\rho$ normalized by the number of vertices  $|K^{(0)}|$; cf.~Example~\ref{ex:total-mass}.
  In particular, the power $\det(\nu^k_{n,\rho})^{|K^{(0)}|}$ of the Fuglede-Kadison determinant
  is just the product over all non-zero eigenvalues of $A_\rho$, i.e.,
  the lowest non-zero coefficient $c_\rho$ of the characteristic polynomial of $A_\rho$.
  We note that $c_\rho \in |G|^{-|K(n)|} \mathcal{O}_E$ and since, by Lemma \ref{lem:laplace-bounded-selfadjoint}, the operator norm of $\Delta_{n,\rho}$ is bounded above,
  there is an upper bound
  $|c_\rho| \leq b^{|K(n)|}$
  where $b$ depends only on $n$ and $D$. 
  
  Consider the action of the Galois group $\mathrm{Gal}(E/\bbQ)$ on the irreducible representations of $G$; cf.~\cite[p.152]{Isaacs}.
  If $\tau \in \mathrm{Gal}(E/\bbQ)$ be a Galois automorphism of $E$,
  then $\tau(A_\rho) = A_{\tau(\rho)}$ and $\tau(c_\rho) = c_{\tau(\rho)}$.
  In particular, we obtain
  \[ |G|^{ [E:\bbQ] |K(n)|}  \prod_{\tau \in \mathrm{Gal}(E/\bbQ)} \tau(c_{\rho}) \in
    \bbZ \setminus\{0\}\] and therefore
  $|c_{\rho}| \geq |G|^{-[E:\bbQ] |K(n)|} b^{-([E:\bbQ]-1)|K(n)|}$.  Since the vertex degree
  is bounded above by $D$, there is a constant $t > 0$ only depending on $n$ and $D$ such that
  $|K(n)| \leq t |K^{(0)}|$.  We conclude that
  \[ \det(\nu^k_{n,\rho}) = |c_\rho|^{1/|K^{(0)}|} \geq |G|^{-[E:\bbQ]t}b^{-([E:\bbQ]-1)t}\]
  and this lower bound does not depend on $K_k$.
  \end{proof}

Let $G$ be a finite group and $H$ be a subgroup.
Let $(\sigma, V)$ be a finite dimensional complex representation of $H$.
Recall that the multiplicity of an irreducible representation $\rho \in \Irr(G)$
in $\bbC[G]\otimes_{\bbC[H]} V$
is determined by the Forbenius receprocity formula
\[ m(\rho, \bbC[G]\otimes_{\bbC[H]} V) = \sum_{\theta \in \Irr(H)} m(\theta,\rho_{|H}) \; m(\theta, V);\]
see \cite[(5.2)]{Isaacs}. The following theorem provides a reciprocity formula for
induced sofic random rooted $G$-complexes. In particular, with $H = \{1\}$
we obtain Theorem \ref{thm:multiplicities-Betti} stated in the introduction.
\begin{theorem}\label{thm:reciprocity}
  Let $\mu$ be a random rooted simplicial $H$-complex.
  If $\mu$ is sofic, then $\ind_H^G(\mu)$ is sofic and, moreover,
  \begin{equation}\label{eq:reciprocity}
    m^{(2)}_n(\rho,\ind_H^G(\mu)) = \frac{|H|}{|G|} \sum_{\theta\in \Irr(H)} m(\theta,\rho_{|H})\;  m_n^{(2)}(\theta, \mu)
  \end{equation}
  for every $\rho \in \Irr(G)$ and all $n\in \bbN_0$.
\end{theorem}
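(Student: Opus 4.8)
The plan is to reduce everything to finite complexes using soficity and the continuity result Theorem~\ref{thm:approximation}, and to identify the induction of measures with the homological induction $\bbC[G]\otimes_{\bbC[H]}(-)$ at the level of finite complexes.

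First, since $\mu$ is sofic, I would fix a sequence $(L_k)_k$ of finite simplicial $H$-complexes with $\mu_{L_k}^H \to \mu$ weakly. The first key observation is the identity
\[ \ind_H^G(\mu_{L_k}^H) = \mu_{G\times_H L_k}^G \]
of measures on $\SC{G}$. To verify it, I would push $\mu_{L_k}^H = |L_k^{(0)}|^{-1}\sum_{x} \delta_{[L_k,Hx]}$ forward along $\ind_H^G$ and, on the other side, parametrize the vertices of $G\times_H L_k$ as $\lfloor g_i, y\rfloor$ using a system of coset representatives $g_i$ of $G/H$; since $G\lfloor g_i, y\rfloor = G\lfloor 1, y\rfloor$, each of the $|G/H|$ representatives contributes the same Dirac mass and the normalizing factor $|G/H|$ cancels. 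Combined with the weak continuity of $\ind_H^G$ (Lemma~\ref{lem:continuity-of-ind}), this identity shows that $\mu_{G\times_H L_k}^G \to \ind_H^G(\mu)$ weakly; as a weak limit of laws of finite $G$-complexes, $\ind_H^G(\mu)$ is sofic.

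Next I would establish the representation-theoretic content. The oriented $n$-simplices of $G\times_H L_k$ are exactly the $G$-translates $\sigma_g$ of those of $L_k$, so the simplicial chain complex carries a natural $G$-module isomorphism $C_n(G\times_H L_k,\bbC) \cong \bbC[G]\otimes_{\bbC[H]} C_n(L_k,\bbC)$ compatible with the boundary maps (the sign issues from orienting $\sigma_g$ are absorbed into the $\bbC[H]$-balancing). As $\bbC[G]$ is free over $\bbC[H]$, induction is exact and commutes with homology, giving an isomorphism of $G$-representations
\[ H_n(G\times_H L_k,\bbC) \cong \bbC[G]\otimes_{\bbC[H]} H_n(L_k,\bbC). \]
Applying the Frobenius reciprocity formula recalled before the theorem then yields
\[ m(\rho, H_n(G\times_H L_k,\bbC)) = \sum_{\theta\in\Irr(H)} m(\theta,\rho_{|H})\, m(\theta, H_n(L_k,\bbC)). \]

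Finally I would translate this into $\ell^2$-multiplicities. Using Example~\ref{ex:finite-complex-multiplicities}, the vertex count $|(G\times_H L_k)^{(0)}| = |G/H|\,|L_k^{(0)}|$, and the previous displayed identity, a direct computation gives
\[ m_n^{(2)}(\rho,\mu_{G\times_H L_k}^G) = \frac{|H|}{|G|}\sum_{\theta\in\Irr(H)} m(\theta,\rho_{|H})\, m_n^{(2)}(\theta,\mu_{L_k}^H). \]
Since $\mu_{L_k}^H \to \mu$ and $\mu_{G\times_H L_k}^G \to \ind_H^G(\mu)$ with all terms sofic, Theorem~\ref{thm:approximation} lets me pass to the limit $k\to\infty$ on both sides (the sum over $\Irr(H)$ being finite), which produces \eqref{eq:reciprocity}. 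I expect the main obstacle to be the careful bookkeeping in the measure identity $\ind_H^G(\mu_{L_k}^H)=\mu_{G\times_H L_k}^G$ and in the chain-level induction isomorphism, in particular handling orientations and stabilizers so that both the counting of vertices and the $\bbC[H]$-balancing come out correctly; once these are in place, the passage to the limit is routine.
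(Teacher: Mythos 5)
Your proposal is correct and follows essentially the same route as the paper's proof: reduce to finite $H$-complexes via soficity, use continuity of $\ind_H^G$ and the identity $\ind_H^G(\mu_{L_k}^H)=\mu_{G\times_H L_k}^G$ to get soficity of the induced measure, identify $H_n(G\times_H L_k,\bbC)$ with $\bbC[G]\otimes_{\bbC[H]}H_n(L_k,\bbC)$, apply Frobenius reciprocity and the normalization from Example~\ref{ex:finite-complex-multiplicities}, and pass to the limit with Theorem~\ref{thm:approximation}. The only difference is that you spell out the measure identity and the chain-level induction isomorphism, which the paper leaves implicit.
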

\begin{proof}
  We write $\nu = \ind_H^G(\mu)$. Since $\mu$ is sofic, we can find a sequence $K_k$ of finite simplicial $H$-complexes such that
  the associated random rooted simplicial complexes $\mu_k$ converge weakly to $\mu$.
  Continuity of induction (Lemma \ref{lem:continuity-of-ind})
  implies that the sequence $\ind_H^G(\mu_k)$ converges to $\nu$. This shows that $\nu$ is
  sofic, since $\ind_H^G(\mu_k)$ is the random rooted simplicial $G$-complex defined by $W_k = G \times_H K_k$.

  Finally, using that $W_k$ is a disjoint union of $|G/H|$ copies of $K_k$ which are permuted by the action
  it follows that
  \[ H_n(W_k,\bbC) \cong \bbC[G] \otimes_{\bbC[H]} H_n(K_k,\bbC).\]
  In particular, Frobenius reciprocity implies that
  \[ m(\rho, H_n(W_k,\bbC)) = \sum_{\theta \in \Irr(H)} m(\theta,\rho_{|H}) \; m(\theta, H_n(K_k,\bbC)).\]
  Based on the relation of multiplicities and $\ell^2$-multiplicities from
  Example \ref{ex:finite-complex-multiplicities} an application of Theorem \ref{thm:approximation} completes the proof.
\end{proof}
\begin{remark}
  It appears to us that formula \eqref{eq:reciprocity} should hold without the assumption of
  soficity.  However, due to some technical problems and based on the fact that we do not know
  a single example of a non-sofic random rooted simplicial complex, we decided to restrict to
  the sofic case.
\end{remark}
\begin{example}(The $\ell^2$-multiplicities of Sierpinski's triangle)\label{ex:Sierpinski2} We
  return to the setting of Example \ref{ex:Sierpinski1} and we compute the
  $\ell^2$-mulitplicities of Sierpinski's triangle with the rotation action of the cyclic
  group $G=\langle \sigma \rangle$ of order $3$.  Recall that, with this action, the
  Sierpinski triangle is an induced random rooted simplicial complex $\ind_1^G(\tau_S)$ where
  $\tau_S$ is the limit of a sequence of finite $2$-dimensional simplicial complexes $T_n$.
  Therefore, by Theorem~\ref{thm:multiplicities-Betti} it is sufficient to compute the $\ell^2$-Betti
  numbers of $\tau_S$.

  In order to use the Approximation Theorem \ref{thm:approximation} for the action of the
  trivial group (compare also \cite{schroedl}), we compute the normalized Betti numbers of
  every $T_n$.  Note that $T_n$ is homotopy equivalent to a $1$-dimensional complex, thus it
  is sufficient to calculate the normalized Euler characteristic of $T_n$.  Using the formulas
  given in Example \ref{ex:Sierpinski1} we find
  \[ \frac{\chi(T_n)}{|T_n^{(0)}|} = 1 - \frac{2 \cdot 3^{n+1}}{3^{n+1}+3} + \frac{2\cdot
      3^n}{3^{n+1}+3} \stackrel{n \to \infty}{\longrightarrow} -\frac{1}{3}.\] Every $T_n$ is
  connected, so $b_0(T_n) = 1$ for all $n$ and the normalized zeroth Betti numbers tend to
  zero; i.e., $b_0^{(2)}(\tau_S) = 0$. We deduce that $b_1^{(2)}(\tau_S) = \frac{1}{3}$.
  Finally, we apply Theorem~\ref{thm:multiplicities-Betti} to deduce that
  $ m^{(2)}_1(\rho,\ind_1^G(\tau_S)) = \frac{1}{9}$ for every irreducible representation
  $\rho \in \Irr(G)$. All $\ell^2$-multiplicities of Sierpinski's triangle vanish outside of
  degree $1$.
\end{example}%
\bibliography{literatur}{}
\bibliographystyle{plain}
\end{document}